\documentclass{article}
\usepackage{mathrsfs}
\usepackage{epsfig, graphicx}
\usepackage{latexsym,amsfonts,amsbsy,amssymb}
\usepackage{amsmath,amsthm}
\usepackage{hyperref}
\usepackage{color}
\textwidth=15cm \textheight=22cm \topmargin -1cm
\oddsidemargin 0in
\evensidemargin 0in \baselineskip= 12pt
\parindent=12pt
\parskip=3pt
\overfullrule=0pt
\bibliographystyle{plain}
\makeatletter 

\@addtoreset{equation}{section}
\makeatother 
\allowdisplaybreaks 

\newtheorem{theorem}{Theorem}[section]
\newtheorem{lemma}{Lemma}[section]

\newtheorem{remark}{Remark}[section]
\newtheorem{algorithm}{Algorithm}[section]
\newtheorem{definition}{Definition}[section]
\newtheorem{proposition}{Proposition}[section]


\begin{document}
\title{Energy Error Estimates of Subspace Projection Method and Multigrid Algorithms for Eigenvalue
Problems\footnote{This work was supported in part by
National Natural Science Foundations of China (NSFC 91330202,
11371026, 11001259, 11031006, 2011CB309703), Science Challenge
Project (No. JCKY2016212A502), the National Center for Mathematics
and Interdisciplinary Science, CAS.}}
\author{Yunhui He\footnote{Department of Mathematics and Statistics,
Memorial University of Newfoundland, St. John's, NL A1C 5S7, Canada (yunhui.he@mun.ca)},\ \
Qichen Hong\footnote{LSEC, ICMSEC, Academy of Mathematics and Systems Science,
Chinese Academy of Sciences, Beijing 100190, P.R. China, and School of Mathematical Sciences, University
of Chinese Academy of Sciences, Beijing, 100049, China (hongqichen@lsec.cc.ac.cn)},
\ \ Hehu Xie\footnote{Corresponding author: LSEC, ICMSEC, Academy of Mathematics and Systems Science,
Chinese Academy of Sciences, Beijing 100190, P.R. China,  and School of Mathematical Sciences, University
of Chinese Academy of Sciences, Beijing, 100049, China (hhxie@lsec.cc.ac.cn)},
\ \ Meiling Yue\footnote{LSEC, ICMSEC, Academy of Mathematics and Systems Science,
Chinese Academy of Sciences, Beijing 100190, P.R. China, and School of Mathematical Sciences, University
of Chinese Academy of Sciences, Beijing, 100049, China (yuemeiling@lsec.cc.ac.cn)}\ \ \ and \
Chunguang You\footnote{LSEC, ICMSEC, Academy of Mathematics and Systems Science,
Chinese Academy of Sciences, Beijing 100190, P.R. China, and School of Mathematical Sciences, University
of Chinese Academy of Sciences, Beijing, 100049, China (youchg@lsec.cc.ac.cn)}
}
\date{}
\maketitle
\begin{abstract}
This paper is to give a new understanding and applications of the subspace projection method for selfadjoint
eigenvalue problems. A new error estimate in the energy norm, which is induced by the stiff matrix,
of the subspace projection method for eigenvalue problems is given. The relation between error estimates
in $L^2$-norm and energy norm is also deduced. Based on this relation, a new type of inverse power method
is designed for eigenvalue problems and the corresponding convergence analysis is also provided.
Then we present the analysis of the geometric and algebraic multigrid methods for eigenvalue problems based
on the convergence result of the new inverse power method.

\vskip0.3cm {\bf Keywords.} Eigenvalue problem, subspace projection method,
energy error estimate, geometric multigrid, algebraic multigrid.

\vskip0.2cm {\bf AMS subject classifications.} 65N30, 65N25, 65L15, 65B99.
\end{abstract}
\section{Introduction}
Large scale eigenvalue problems always occur in discipline of science and engineering such as material science,
quantum chemistry or physics, structure mechanics, biological system, data and information fields, etc.
With the increasing of the size and complexity of eigenvalue problems, the efficient solvers become
very important and there is a strong demand by engineers and scientists for efficient eigenvalue solvers.
There exist many numerical methods for solving large scale eigenvalue problems which are
based on power iteration, Krylov subspace iteration and so on\cite{BaiDemmelDongarraRuheVorst,Saad1,Saad2,Yousef}.
The basic idea is the subspace projection method especially for the eigenvalue problems which come
from the discretization of differential operators. From this point of view, the Krylov subspace
\cite{BaiDemmelDongarraRuheVorst,Saad1,Saad2,Yousef} and LOBPCG \cite{Knyazev} methods can be seen
as providing ways to build the subspace. The subspace projection understanding also provides the idea
to find new methods to design efficient eigenvalue solves which will be discussed in this paper.

It is well known that the algebraic and geometric multigrid methods are efficient solvers for
linear equations which come from the discretization of partial differential equations \cite{Bramble,BrambleZhang,BrennerScott,Hackbusch_Book,McCormick,RugeStuben,Shaidurov,Stuben,TrottenbergOosterleeSchuller,Xu1992Iterative}.
So far, the corresponding theory, applications and software have already been developed very well.
A natural topic is to consider the applications of multigrid methods to eigenvalue problems.
In the past three decades, there have appeared many applications of the multigrid method to eigenvalue problems.
The paper \cite{KnyazevNeymeyr} gives a very good review of different multigrid- based approaches for numerical
solutions of eigenvalue problems.
So far, the designing of the multigrid- based methods depends strongly on eigenvalue solvers and the multigrid is used as an inner solver.
Recently, we propose a multilevel and multigrid schemes \cite{HanLiXie,LinXie_2011,LinXie_2012,LinXie,Xie_JCP,Xie_IMA,Xie_Nonconforming}
for eigenvalue problems which is based on an new idea to construct the subspace. This schemes decompose a large scale eigenvalue
problem into standard linear equations plus small scale eigenvalue problems such that the choices of linear solvers and eigensolvers
are both free.  We also find papers \cite{KuzlVanek,Pultarova,VanekPultarov} also give the similar method for symmetric
positive eigenvalue problems which is only based on the inverse iteration or Rayleigh quotient method.
Actually, the method and results in this paper can also provide a reasonable analysis for their schemes.

The theoretical analysis reveals that the multigrid method depends on the ellipticity which is included in the matrices since
the ellipticity can leads to the famous duality argument (Aubin-Nitsche technique) \cite{BrennerScott,Ciarlet}.
As we know the duality argument deduces the relation between the error estimates in weak and strong
norms, which is the most important basis for the multigrid method designing.
Unfortunately, there is no this type of duality argument in the numerical theory for algebraic
eigenvalue problems. It is well known that there exists the error estimate in $L^2$-norm
for the subspace approximation method \cite{Saad1,Saad2,Yousef}. This $L^2$-norm error estimate
leads the error estimates for the Laczos and Arnoldi methods for eigenvalue problems \cite{Yousef}.
In order to understand and design multigrid methods in the sense of subspace method for eigenvalue problems,
we give the error estimate in the energy norm and the relation between error estimates
in $L^2$-norm and energy norm. Furthermore, this relation provides an idea to design and understand
the geometric and algebraic multigrid methods for eigenvalue problems.

An outline of the paper goes as follows. In Section 2, we introduce the subspace method
for solving eigenvalue problem. A new energy error estimate and the relation between
error estimates in $L^2$-norm and energy norm are given in Section 3.
In Section 4, we design and analyze a new type of inverse power method on a special subspace  for
eigenvalue problem. Based on the results in Section 4, we will describe the geometric and algebraic
multigrid methods for the eigenvalue problem solving in Sections 5 and 6, respectively.
Some concluding remarks are given in the last section.

\section{Subspace projection method}
For clearity, we simply introduce some basic knowledge of subspace projection method
and some definitions here. In this paper, we are mainly concerned with the following algebraic
eigenvalue problem: Find $u\in\mathbb{R}^n$ and $\lambda\in\mathbb{R}$  such that
\begin{equation}\label{Eigenvalue_Equation_0}
Au=\lambda u,
\end{equation}
where $A\in\mathbb{R}^{n\times n}$ is a symmetric positive definite matrix.

For our description and analysis, we introduce the following $L^2$ and energy inner products
\begin{eqnarray*}
(x,y)=x^Ty,\ \ \ \ \ (x,y)_A = x^TAy.
\end{eqnarray*}
Then the corresponding $L^2$-norm and energy norm can be defined as follows
\begin{eqnarray*}
\|x\|_2:=\sqrt{(x,x)},\ \  \quad \|x\|_A:=\sqrt{(x,x)_A}=\sqrt{(Ax,x)}.
\end{eqnarray*}

Let $\mathcal K$ be a $m$-dimensional subspace of $\mathbb{R}^n$.
An orthogonal projection technique for the eigenvalue problem onto the subspace $\mathcal K$ is to
seek an approximate eigenpair $(\widetilde\lambda, \widetilde u)\in \mathbb R\times \mathcal K$
to the problem (\ref{Eigenvalue_Equation_0}),
such that the following Galerkin condition is satisfied:
\begin{equation}\label{Gakerkin_condition}
A \widetilde u-\widetilde \lambda\widetilde u \perp_2 \mathcal K,
\end{equation}
or equivalently
\begin{equation}\label{Projection_method}
(A\widetilde u-\widetilde \lambda \widetilde u, v)=0, \quad \forall v\in \mathcal K.
\end{equation}
It is well known that there exists an orthonormal basis $\{v_1,\cdots,v_m\}$ for $\mathcal{K}$
and we denote the matrix with this basis being the column vectors by $V$
\begin{eqnarray*}
V:=[v_1,\cdots,v_m]\in \mathbb R^{n\times m}.
\end{eqnarray*}
Based on the orthonormal basis $V$ of the subspace $\mathcal K$, we can transform the
original eigenvalue problem (\ref{Eigenvalue_Equation_0}) to a small-scale eigenvalue
problem (always it is called Ritz problem).
Let $\widetilde u=Vy$ with $y\in\mathbb R^m$.  Then the problem (\ref{Projection_method}) becomes
\begin{equation}
(AVy-\widetilde \lambda Vy,v_j)=0, \quad {\rm for}\ j=1,2,\cdots,m.
\end{equation}
So we just need to solve the following small-scale eigenproblem:
Find $y\in\mathbb R^m$ and $\widetilde \lambda\in\mathbb R$ such that
\begin{equation}\label{Ritz_Problem}
A_my=\widetilde \lambda y,
\end{equation}
where $A_m=V^{T}AV$.

In order to translate the subspace projection method into the operator form,
we define some projection operators.
\begin{definition}($L^2$-Projector)
The $L^2$-projection operator $\mathcal R_\mathcal{K}:\mathbb{R}^n\mapsto \mathcal{K}$
is defined as follows
\begin{eqnarray}\label{L2_Projection}
(\mathcal R_\mathcal{K}x,y)=(x,y),\quad \forall x\in \mathbb R^n\ {\rm and}\  \forall y\in \mathcal{K}.
\end{eqnarray}
\end{definition}
In order to give the error estimate of the subspace projection method in the norm $\|\cdot\|_A$,
we also define the following projection operator by the inner product $(\cdot,\cdot)_A$.
\begin{definition}($A$-Projector)
The Galerkin projection operator $\mathcal{P}_{\mathcal{K}}:\mathbb{R}^n\mapsto\mathcal{K}$
is defined as follows
\begin{eqnarray}\label{Energy_Projection}
(\mathcal{P}_{\mathcal{K}}x,y)_A=(x,y)_A,\quad \forall x\in\mathbb R^n\ {\rm and}\  \forall y\in \mathcal{K}.
\end{eqnarray}
\end{definition}

Based on the projection operator $\mathcal R_\mathcal K$,
the Galerkin condition (\ref{Gakerkin_condition}) is equivalent to the following condition
\begin{equation*}
\mathcal{R}_{\mathcal{K}}(A\widetilde u-\widetilde \lambda \widetilde u)=0,
\quad {\rm where}\ \widetilde\lambda\in \mathbb{R}\ \ {\rm and}\ \ \widetilde{u}\in\mathcal{K}.
\end{equation*}
The above equality can be written as
\begin{equation*}
\mathcal{R}_{\mathcal{K}}A \widetilde u=\widetilde \lambda\widetilde  u,
\quad {\rm where}\ \widetilde \lambda\in \mathbb{R}\ \ {\rm and}\ \ \widetilde u\in\mathcal{K}.
\end{equation*}
This operator $\mathcal R_\mathcal{K}A$ can be viewed as $\mathcal R_{\mathcal{K}}A|_\mathcal{K}$
from $\mathcal{K}$ to $\mathcal{K}$.

Now, we state the following error estimate for the subspace
projection method which also motivates the analysis in this paper.
\begin{lemma}(\cite[Theorem 4.6]{Yousef})
Let $\gamma=\|\mathcal R_\mathcal KA(I-\mathcal R_\mathcal K)\|_2$, and consider any eigenvalue
$\lambda$ of $A$ with associated eigenvector $u$. Let $\widetilde\lambda$ be the approximate
eigenvalue closest to $\lambda$ and $\delta$ the distance between $\lambda$ and the set of
approximate eigenvalues other than $\widetilde\lambda$. Then there exists an approximate
eigenvector $\widetilde u$ associated with $\widetilde\lambda$ such that
\begin{eqnarray}
\sin[\theta(u,\widetilde u)]&\leq& \sqrt{1+\frac{\gamma^2}{\delta^2}}\sin[\theta(u,\mathcal K)].
\end{eqnarray}
\end{lemma}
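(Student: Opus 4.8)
The plan is to treat the projected eigenvector $\mathcal{R}_{\mathcal{K}}u$ as an approximate eigenvector of the restricted operator $B := \mathcal{R}_{\mathcal{K}}A|_{\mathcal{K}}$ and to control how far it sits from the chosen Ritz eigenvector $\widetilde u$ by a residual-versus-gap argument. First I would normalize so that $\|u\|_2 = 1$ and write $u = \mathcal{R}_{\mathcal{K}}u + (I - \mathcal{R}_{\mathcal{K}})u$, recording that $\sin[\theta(u,\mathcal K)] = \|(I-\mathcal{R}_{\mathcal{K}})u\|_2$ because $\mathcal{R}_{\mathcal{K}}$ is the orthogonal projector onto $\mathcal{K}$. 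Applying $\mathcal{R}_{\mathcal{K}}$ to $Au = \lambda u$ and splitting $u$ through the projector produces the key residual identity
\begin{equation*}
B(\mathcal{R}_{\mathcal{K}}u) = \lambda\, \mathcal{R}_{\mathcal{K}}u - r,\qquad r := \mathcal{R}_{\mathcal{K}}A(I-\mathcal{R}_{\mathcal{K}})u.
\end{equation*}
Since $I - \mathcal{R}_{\mathcal{K}}$ is idempotent, I can rewrite $r = \mathcal{R}_{\mathcal{K}}A(I-\mathcal{R}_{\mathcal{K}})\cdot(I-\mathcal{R}_{\mathcal{K}})u$, and the definition of $\gamma$ then yields $\|r\|_2 \le \gamma\,\|(I-\mathcal{R}_{\mathcal{K}})u\|_2 = \gamma\sin[\theta(u,\mathcal K)]$. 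Thus $\mathcal{R}_{\mathcal{K}}u$ is an approximate eigenvector of $B$ for the eigenvalue $\lambda$ whose residual is already controlled by the quantity appearing on the right-hand side of the claim.

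The second step is spectral. Because $A$ is symmetric and $\mathcal{R}_{\mathcal{K}}$ is an orthogonal projector, $B$ is self-adjoint on $\mathcal{K}$ (equivalently $A_m = V^{T}AV$ is symmetric), so it admits an $L^2$-orthonormal eigenbasis $\{\widetilde u_i\}$ with Ritz values $\widetilde\lambda_i$; let $\widetilde u = \widetilde u_j$ with $\widetilde\lambda_j = \widetilde\lambda$ the closest Ritz value to $\lambda$. Expanding $\mathcal{R}_{\mathcal{K}}u = \sum_i \xi_i \widetilde u_i$ and inserting it into the residual identity gives $r = \sum_i \xi_i(\lambda - \widetilde\lambda_i)\widetilde u_i$, hence $\|r\|_2^2 = \sum_i \xi_i^2(\lambda-\widetilde\lambda_i)^2$. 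Using $|\lambda - \widetilde\lambda_i| \ge \delta$ for every $i \ne j$ by the definition of the gap $\delta$, I discard the $i=j$ term to obtain
\begin{equation*}
\sum_{i\ne j}\xi_i^2 \;\le\; \frac{\|r\|_2^2}{\delta^2}\;\le\;\frac{\gamma^2}{\delta^2}\sin^2[\theta(u,\mathcal K)].
\end{equation*}

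The final step is to assemble the angle. With $\widetilde u$ a unit vector and $\|u\|_2=1$, one has $\sin^2[\theta(u,\widetilde u)] = 1 - (u,\widetilde u)^2 = 1 - \xi_j^2$, since $(u,\widetilde u) = (\mathcal{R}_{\mathcal{K}}u,\widetilde u) = \xi_j$ (the part $(I-\mathcal{R}_{\mathcal{K}})u$ is orthogonal to $\widetilde u\in\mathcal K$). Splitting $1 - \xi_j^2 = (1 - \|\mathcal{R}_{\mathcal{K}}u\|_2^2) + (\|\mathcal{R}_{\mathcal{K}}u\|_2^2 - \xi_j^2)$ and recognizing the two pieces as $\sin^2[\theta(u,\mathcal K)]$ and $\sum_{i\ne j}\xi_i^2$ respectively gives the Pythagorean identity
\begin{equation*}
\sin^2[\theta(u,\widetilde u)] = \sin^2[\theta(u,\mathcal K)] + \sum_{i\ne j}\xi_i^2.
\end{equation*}
Inserting the bound from the previous step and factoring out $\sin^2[\theta(u,\mathcal K)]$ produces $\sin^2[\theta(u,\widetilde u)] \le (1+\gamma^2/\delta^2)\sin^2[\theta(u,\mathcal K)]$, and taking square roots finishes the proof. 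I expect the main obstacle to be the bookkeeping in this last step, namely correctly identifying $\sin^2[\theta(u,\widetilde u)]$ as the sum of the out-of-direction coefficients plus the out-of-subspace defect, together with the one genuinely structural point that $r$ must be routed through the idempotent $I-\mathcal{R}_{\mathcal{K}}$ so that the operator norm $\gamma$ can be tested against $\|(I-\mathcal{R}_{\mathcal{K}})u\|_2$ rather than against $\|u\|_2$.
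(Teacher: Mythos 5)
The paper does not actually prove this lemma: it is quoted with a citation to Theorem 4.6 of Saad's book, and your argument is a correct reconstruction of exactly that standard proof --- the residual identity $B(\mathcal{R}_{\mathcal{K}}u)=\lambda\mathcal{R}_{\mathcal{K}}u-r$ with $r$ routed through the idempotent $I-\mathcal{R}_{\mathcal{K}}$, the gap bound on the off-direction coefficients, and the Pythagorean splitting of $1-\xi_j^2$. The one point to tighten is the case where $\widetilde\lambda$ is a multiple Ritz value: then some indices $i\neq j$ satisfy $\widetilde\lambda_i=\widetilde\lambda$ and are not controlled by $\delta$, so you should take $\widetilde u$ to be the normalized projection of $\mathcal{R}_{\mathcal{K}}u$ onto the whole eigenspace of $\widetilde\lambda$ (which is why the statement reads ``there exists an approximate eigenvector''); with $J=\{i:\widetilde\lambda_i=\widetilde\lambda\}$ the identity becomes $\sin^2[\theta(u,\widetilde u)]=\sin^2[\theta(u,\mathcal K)]+\sum_{i\notin J}\xi_i^2$ and the rest of your computation goes through unchanged.
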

Now, we also state some properties about the energy projection operator $\mathcal P_\mathcal K$.
\begin{proposition}\label{A_orthor_proposition}
It is obvious that the following properties hold
\begin{eqnarray}
\mathcal{P}_{\mathcal{K}}^2=\mathcal{P}_{\mathcal{K}}
\ \ \ \ {\rm and}\ \ \ \
\mathcal{P}_{\mathcal{K}}(I-\mathcal{P}_{\mathcal{K}})=0.
\end{eqnarray}
\end{proposition}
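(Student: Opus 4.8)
The plan is to establish the idempotency $\mathcal{P}_{\mathcal{K}}^2=\mathcal{P}_{\mathcal{K}}$ first, from which the second identity follows by a one-line algebraic manipulation. The key observation I would exploit is that $\mathcal{P}_{\mathcal{K}}$ acts as the identity on $\mathcal{K}$: if $z\in\mathcal{K}$, then $z$ itself already satisfies the defining relation (\ref{Energy_Projection}) in place of $\mathcal{P}_{\mathcal{K}}z$, so by uniqueness $\mathcal{P}_{\mathcal{K}}z=z$. This fixed-point property is the whole engine of the proof.

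First I would verify that $\mathcal{P}_{\mathcal{K}}z=z$ for every $z\in\mathcal{K}$. Subtracting the identity $(z,y)_A=(z,y)_A$ from the definition gives $(\mathcal{P}_{\mathcal{K}}z-z,y)_A=0$ for all $y\in\mathcal{K}$. Since both $\mathcal{P}_{\mathcal{K}}z$ and $z$ lie in $\mathcal{K}$, their difference lies in $\mathcal{K}$, so I may choose $y=\mathcal{P}_{\mathcal{K}}z-z$, obtaining $\|\mathcal{P}_{\mathcal{K}}z-z\|_A^2=0$. Because $A$ is symmetric positive definite, $\|\cdot\|_A$ is a genuine norm, which forces $\mathcal{P}_{\mathcal{K}}z=z$.

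Next, for an arbitrary $x\in\mathbb{R}^n$ I would set $z:=\mathcal{P}_{\mathcal{K}}x$, which lies in $\mathcal{K}$ by the definition of the operator. Applying the fixed-point property just established yields $\mathcal{P}_{\mathcal{K}}^2x=\mathcal{P}_{\mathcal{K}}(\mathcal{P}_{\mathcal{K}}x)=\mathcal{P}_{\mathcal{K}}z=z=\mathcal{P}_{\mathcal{K}}x$. As $x$ was arbitrary, $\mathcal{P}_{\mathcal{K}}^2=\mathcal{P}_{\mathcal{K}}$. The second identity then follows immediately, since $\mathcal{P}_{\mathcal{K}}(I-\mathcal{P}_{\mathcal{K}})=\mathcal{P}_{\mathcal{K}}-\mathcal{P}_{\mathcal{K}}^2=\mathcal{P}_{\mathcal{K}}-\mathcal{P}_{\mathcal{K}}=0$.

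There is essentially no serious obstacle here; the only point deserving care is the well-definedness and uniqueness of $\mathcal{P}_{\mathcal{K}}x$, which rests on the positive definiteness of $A$ (guaranteeing that $(\cdot,\cdot)_A$ is a true inner product on $\mathbb{R}^n$ and hence that the finite-dimensional Galerkin problem on $\mathcal{K}$ has a unique solution). Once that is granted, the argument is purely formal, consistent with the authors' remark that the properties are ``obvious.''
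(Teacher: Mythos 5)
Your proof is correct. The paper offers no proof at all for this proposition---the authors simply declare the properties ``obvious''---so there is nothing to compare against; your argument (that $\mathcal{P}_{\mathcal{K}}$ fixes $\mathcal{K}$ pointwise, established by testing $(\mathcal{P}_{\mathcal{K}}z-z,y)_A=0$ with $y=\mathcal{P}_{\mathcal{K}}z-z$ and invoking positive definiteness of $A$, followed by the algebraic deduction of the second identity from idempotency) is exactly the standard justification the authors are implicitly relying on, and it correctly supplies the omitted details.
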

\section{Energy error estimate}
In this section, we will give some new error estimates of the subspace method
for the eigenvalue problem. The new thing in this paper is that we establish error
estimates of the eigenfunction approximation in energy norm rather than $L^2$-norm.
Furthermore, the relation between the energy norm and $L^2$-norm is also derived here.

We can order the eigenvalues of the matrix $A$ as the following
increasing sequence
\begin{equation*}
\lambda_1\leq\lambda_2\leq\cdots \leq \lambda_n,
\end{equation*}
and the corresponding eigenvectors
\begin{equation*}
u_1,u_2,\cdots,u_n,
\end{equation*}
where $(u_i,u_j)_A=\delta_{ij}$ for $i,j=1,...,n$ and $\delta_{ij}$ is Kronecker notation.
Similarly, the eigenpairs of $A_m$ can be ordered as follows
\begin{equation}\label{Lambda_Tilde}
\widetilde{\lambda}_1\leq \widetilde{\lambda}_2\leq\cdots \leq \widetilde{\lambda}_m,
\end{equation}
and
\begin{equation*}
\widetilde{u}_1,\widetilde{u}_2,\cdots,\widetilde{u}_m,
\end{equation*}
where $(\widetilde{u}_i,\widetilde{u}_j)_A=\delta_{ij}$ for $i,j=1,...,m$.


From the min-max principle for eigenvalue problems, the following
upper bound property holds.
\begin{proposition}(\cite[Corollary 4.1]{Yousef})
The following inequality holds
\begin{equation}\label{Upper_Bound}
\lambda_i\leq  \widetilde{\lambda}_i, \quad i=1,2,\cdots,m.
\end{equation}
\end{proposition}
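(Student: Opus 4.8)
The plan is to derive this purely from the Courant--Fischer min-max characterization of eigenvalues of a symmetric matrix, applying it once to $A$ on all of $\mathbb{R}^n$ and once to the Ritz matrix $A_m=V^TAV$, and then comparing the two over their respective families of trial subspaces. First I would record the min-max formula for the exact eigenvalues: for each $i=1,\dots,m$,
\begin{equation*}
\lambda_i=\min_{\substack{S\subseteq\mathbb{R}^n\\ \dim S=i}}\ \max_{\substack{x\in S\\ x\neq 0}}\frac{(Ax,x)}{(x,x)},
\end{equation*}
where the minimum runs over all $i$-dimensional subspaces of $\mathbb{R}^n$ and the quotient is the usual Rayleigh quotient.

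Next I would apply the same principle to the symmetric matrix $A_m=V^TAV$, whose eigenvalues are the Ritz values $\widetilde\lambda_i$, obtaining
\begin{equation*}
\widetilde\lambda_i=\min_{\substack{T\subseteq\mathbb{R}^m\\ \dim T=i}}\ \max_{\substack{y\in T\\ y\neq 0}}\frac{(A_m y,y)}{(y,y)}.
\end{equation*}
The key algebraic step is the substitution $x=Vy$. Because the columns of $V$ are $L^2$-orthonormal, $V^TV=I$, so $(y,y)=y^TV^TVy=(Vy,Vy)=(x,x)$ and $(A_m y,y)=y^TV^TAVy=(Ax,x)$; hence the Rayleigh quotient of $y$ with respect to $A_m$ equals the Rayleigh quotient of $x=Vy$ with respect to $A$. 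Moreover $V$ is injective, so $y\mapsto Vy$ carries each $i$-dimensional subspace $T\subseteq\mathbb{R}^m$ bijectively onto an $i$-dimensional subspace $S=V(T)\subseteq\mathcal{K}$, and conversely every $i$-dimensional subspace of $\mathcal{K}$ arises in this way. Rewriting the previous display in terms of $x$ therefore gives
\begin{equation*}
\widetilde\lambda_i=\min_{\substack{S\subseteq\mathcal{K}\\ \dim S=i}}\ \max_{\substack{x\in S\\ x\neq 0}}\frac{(Ax,x)}{(x,x)}.
\end{equation*}

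Finally I would compare the two characterizations. The only difference lies in the feasible set of the outer minimization: for $\lambda_i$ it is all $i$-dimensional subspaces of $\mathbb{R}^n$, whereas for $\widetilde\lambda_i$ it is restricted to those subspaces that additionally lie inside $\mathcal{K}$. Since $\mathcal{K}\subseteq\mathbb{R}^n$, the feasible set for $\widetilde\lambda_i$ is a subset of the feasible set for $\lambda_i$, and minimizing the same functional over a smaller collection can only increase or preserve the value. This yields $\lambda_i\le\widetilde\lambda_i$ immediately.

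I expect the two min-max formulas themselves to be entirely standard; the one point deserving genuine care is the change of variables $x=Vy$, namely verifying that it preserves the Rayleigh quotient (which uses $V^TV=I$) and that it sets up the claimed bijection between $i$-dimensional subspaces of $\mathbb{R}^m$ and of $\mathcal{K}$ (which uses the injectivity of $V$). Once that correspondence is in place, the inequality is an immediate consequence of the monotonicity of the minimum under shrinking the feasible set.
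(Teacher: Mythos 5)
Your argument is correct and is exactly the standard min-max (Courant--Fischer) proof that the paper itself invokes: it states only ``From the min-max principle for eigenvalue problems, the following upper bound property holds'' and cites the reference, so your write-up simply fills in the details of that same route. The change of variables $x=Vy$ with $V^TV=I$ and the observation that the feasible subspaces for $\widetilde\lambda_i$ form a subset of those for $\lambda_i$ are precisely the standard justification.
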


In this paper, we solve the eigenvalue problem on the subspace $\mathcal K$ to
obtain the eigenpair approximations
$(\widetilde\lambda_1,\widetilde u_1), \cdots, (\widetilde\lambda_k,\widetilde u_k)$
for $k$ exact eigenpairs of eigenvalue problem (\ref{Eigenvalue_Equation_0}).
In order to deduce the error estimate in the energy norm, we write the eigenvalue
problem (\ref{Eigenvalue_Equation_0}) as the following version: Find $u\in \mathbb R^n$ and $\mu\in\mathbb R$
such that
\begin{equation}\label{Eigenvalue_Equation}
A^{-1}u=\mu u
\end{equation}
or the following variational form
\begin{equation}\label{Weak_Eigenvalue_Equation}
(A^{-1}u,v)_A=\mu (u,v)_A,\ \ \ \forall v\in \mathbb R^n.
\end{equation}
It is easy to know that $\mu=1/\lambda$.
For simplicity of notation, we denote $A^{-1}$ by $T$ in this paper.
We know that $T$ has the same eigenvector as $A$ with the eigenvalue $\mu$.
Now the Galerkin equation for the approximation on the subspace $\mathcal K$
is defined as follows
\begin{eqnarray}\label{Galerkin_Equation}
(\mathcal P_\mathcal KT\widetilde u,v)_A=\widetilde \mu (\widetilde u,v)_A,
\ \ \ \forall v\in\mathcal K.
\end{eqnarray}
The equation (\ref{Galerkin_Equation}) can also be written as the following operator form
\begin{eqnarray}\label{Subspace_Operator_Eigen}
\mathcal P_\mathcal K T\widetilde u=\widetilde \mu \widetilde u.
\end{eqnarray}
Obviously, the eigenvalue problem (\ref{Subspace_Operator_Eigen}) has the eigenvalues
$\widetilde \mu_1\geq\cdots\geq \widetilde\mu_m$ and the corresponding
eigenvectors $\widetilde u_1,\cdots,\widetilde u_m$.
We can also know that $\widetilde\mu_i=1/\widetilde\lambda_i$.

Before stating the error estimates of the subspace projection method, we introduce a lemma which
comes from \cite{StrangFix}. For completeness, a proof is also stated here. 
\begin{lemma}(\cite[Lemma 6.4]{StrangFix})\label{Strang_Lemma}
For any exact eigenpair $(\lambda,u)$ of (\ref{Eigenvalue_Equation_0}), the following equality holds
\begin{eqnarray}\label{Strang_Equality}
(\widetilde\lambda_j-\lambda)(\mathcal P_\mathcal Ku,\widetilde u_j)
=\lambda(u-\mathcal P_\mathcal Ku,\widetilde u_j),\ \ \
j = 1, ...,m.
\end{eqnarray}
\end{lemma}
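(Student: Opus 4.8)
The plan is to derive the identity purely from the two variational characterizations at hand, without invoking any analytic estimate. The first is the exact eigenvalue problem written variationally, namely $(u,v)_A=\lambda(u,v)$ for all $v\in\mathbb{R}^n$, which is just $Au=\lambda u$ tested against $v$ and using $(A\widetilde{u},v)=(\widetilde{u},v)_A$. The second is the Ritz problem in the form $(\widetilde u_j,v)_A=\widetilde\lambda_j(\widetilde u_j,v)$ for all $v\in\mathcal K$, which is the Galerkin condition (\ref{Gakerkin_condition}) rewritten in the energy inner product. Since (\ref{Strang_Equality}) is an algebraic identity rather than an inequality, the whole argument reduces to a careful bookkeeping exercise in passing between the $L^2$ and energy inner products.

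First I would test the Ritz equation with the admissible function $v=\mathcal P_{\mathcal K}u\in\mathcal K$ and use symmetry of both inner products to get
\[
(\mathcal P_{\mathcal K}u,\widetilde u_j)_A=\widetilde\lambda_j(\mathcal P_{\mathcal K}u,\widetilde u_j).
\]
Next I would rewrite the left-hand side using the defining property (\ref{Energy_Projection}) of the $A$-projector: because $\widetilde u_j\in\mathcal K$, one has $(\mathcal P_{\mathcal K}u,\widetilde u_j)_A=(u,\widetilde u_j)_A$. Testing the exact eigenvalue equation with $v=\widetilde u_j$ then gives $(u,\widetilde u_j)_A=\lambda(u,\widetilde u_j)$, so that
\[
\lambda(u,\widetilde u_j)=\widetilde\lambda_j(\mathcal P_{\mathcal K}u,\widetilde u_j).
\]
Finally I would split $(u,\widetilde u_j)=(\mathcal P_{\mathcal K}u,\widetilde u_j)+(u-\mathcal P_{\mathcal K}u,\widetilde u_j)$ on the left and move the term $\lambda(\mathcal P_{\mathcal K}u,\widetilde u_j)$ to the right, which produces exactly $(\widetilde\lambda_j-\lambda)(\mathcal P_{\mathcal K}u,\widetilde u_j)=\lambda(u-\mathcal P_{\mathcal K}u,\widetilde u_j)$, the claimed equality (\ref{Strang_Equality}).

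There is no genuinely hard step here; the only point requiring care is the interplay between the two inner products. The $A$-projector is defined through the energy pairing, so the replacement $(\mathcal P_{\mathcal K}u,\widetilde u_j)_A=(u,\widetilde u_j)_A$ is exact, whereas the quantities appearing in the target identity are written in the $L^2$ pairing. The bridge between them is the exact eigenvalue relation $(u,\widetilde u_j)_A=\lambda(u,\widetilde u_j)$, which converts one energy pairing into a scaled $L^2$ pairing. Keeping track of which pairing carries the subscript $A$ at each stage, together with choosing $v=\mathcal P_{\mathcal K}u$ and $v=\widetilde u_j$ as the right test functions, is essentially the entire content of the proof.
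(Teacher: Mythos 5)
Your proposal is correct and follows essentially the same route as the paper: the key chain $\widetilde\lambda_j(\mathcal P_{\mathcal K}u,\widetilde u_j)=(\mathcal P_{\mathcal K}u,\widetilde u_j)_A=(u,\widetilde u_j)_A=\lambda(u,\widetilde u_j)$, obtained from the Ritz equation tested with $\mathcal P_{\mathcal K}u$, the defining property of the $A$-projector, and the exact eigenvalue relation, is exactly the paper's argument. The only cosmetic difference is that the paper first cancels $-\lambda(\mathcal P_{\mathcal K}u,\widetilde u_j)$ from both sides and reduces to that identity, whereas you derive the identity and then rearrange; these are the same computation.
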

\begin{proof}
Since $-\lambda (\mathcal P_\mathcal Ku,\widetilde u_j)$ appears on both sides, we only need to prove
that
\begin{eqnarray*}
\widetilde\lambda_j(\mathcal P_\mathcal Ku,\widetilde u_j)=\lambda (u,\widetilde u_j).
\end{eqnarray*}
From (\ref{Eigenvalue_Equation_0}), (\ref{Projection_method}) and (\ref{Energy_Projection}),
the following equalities hold
\begin{eqnarray*}
\widetilde\lambda_j(\mathcal P_\mathcal Ku,\widetilde u_j) = (\mathcal P_\mathcal Ku,\widetilde u_j)_A
=(u,\widetilde u_j)_A = \lambda(u,\widetilde u_j).
\end{eqnarray*}
Then the proof is complete.
\end{proof}
\begin{theorem}\label{Error_Estimate_Theorem}
Let  $(\lambda,u)$ denote an exact eigenpair of the eigenvalue problem (\ref{Eigenvalue_Equation_0}).
Assume the eigenpair approximation $(\widetilde\lambda_i,\widetilde u_i)$ has the property that
$\widetilde\mu_i=1/\widetilde\lambda_i$ is closest to $\mu=1/\lambda$.
The corresponding spectral projection $E_m^{(i)}: \mathbb R^n\mapsto {\rm span}\{\widetilde u_i\}$ is defined as follows
\begin{eqnarray*}
(E_m^{(i)}w,\widetilde u_i)_A = (w,\widetilde u_i)_A,\ \ \ \ \forall w\in \mathbb R^n.
\end{eqnarray*}
Then the following error estimate holds
\begin{eqnarray}\label{Energy_Error_Estimate}
\|u-E_m^{(i)}u\|_A&\leq& \sqrt{1+\frac{\widetilde\mu_1}{\delta_i^2}\eta_\mathcal K^2}
\|(I-\mathcal P_\mathcal K)u\|_A,
\end{eqnarray}
where $\eta_\mathcal K$ and $\delta_i$ are defined as follows
\begin{eqnarray}
\eta_\mathcal K&:=&\sup_{\|g\|_2=1}\|(I-\mathcal P_\mathcal K)Tg\|_A
=\sup_{\|g\|_2=1}\|(I-\mathcal P_\mathcal K)A^{-1}g\|_A\label{Definition_Gamma_},\\
\delta_i &:=& \min_{j\neq i}|\widetilde \mu_j-\mu|=\min_{j\neq i}
\Big|\frac{1}{\widetilde\lambda_j}-\frac{1}{\lambda}\Big|.\label{Definition_Delta}
\end{eqnarray}
Furthermore, the eigenvector approximation $\widetilde u_i$ has the following
error estimate in the $L^2$-norm
\begin{eqnarray}\label{L2_Error_Estimate}
\|u-E_m^{(i)}u\|_2 &\leq&\eta_{\mathcal K,i}\|u-E_m^{(i)}u\|_A,
\end{eqnarray}
where $\eta_{\mathcal K,i}$ is defined as follows
\begin{eqnarray}\label{Definition_Gamma_2}
\eta_{\mathcal K,i} = \Big(1+\frac{\widetilde\mu_1}{\delta_i}\Big)\eta_\mathcal K.
\end{eqnarray}
\end{theorem}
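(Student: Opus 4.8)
The plan is to work entirely in the $A$-inner product, in which $\{\widetilde u_1,\dots,\widetilde u_m\}$ is an orthonormal basis of $\mathcal K$ and $E_m^{(i)}u=(u,\widetilde u_i)_A\widetilde u_i$. Writing $c_j:=(u,\widetilde u_j)_A$ and $w:=(I-\mathcal P_\mathcal K)u$, I would first split
\begin{equation*}
u-E_m^{(i)}u=(I-\mathcal P_\mathcal K)u+\big(\mathcal P_\mathcal Ku-E_m^{(i)}u\big)=w+\sum_{j\neq i}c_j\widetilde u_j .
\end{equation*}
The first summand is $A$-orthogonal to $\mathcal K$ and the second lies in $\mathcal K$, so the Pythagorean identity gives $\|u-E_m^{(i)}u\|_A^2=\|w\|_A^2+\sum_{j\neq i}c_j^2$. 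The whole energy estimate thus reduces to showing $\sum_{j\neq i}c_j^2\le\frac{\widetilde\mu_1}{\delta_i^2}\eta_\mathcal K^2\|w\|_A^2$.

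Two elementary facts about $T=A^{-1}$ drive everything. First, $(Tx,y)_A=(x,y)$ for all $x,y$ (because $T^TA=I$), which in particular makes $T$ self-adjoint in the $A$-inner product. Second, this identity forces $(\widetilde u_j,\widetilde u_k)=\widetilde\mu_j\delta_{jk}$, so the Ritz vectors are also $L^2$-orthogonal with $\|\widetilde u_j\|_2^2=\widetilde\mu_j$; consequently $\|z\|_2\le\sqrt{\widetilde\mu_1}\,\|z\|_A$ for every $z\in\mathcal K$. Using the relations $(u,\widetilde u_j)=\mu c_j$ and $(\mathcal P_\mathcal Ku,\widetilde u_j)=\widetilde\mu_jc_j$ (both immediate from the identity above and the eigen-equation $\mathcal P_\mathcal KT\widetilde u_j=\widetilde\mu_j\widetilde u_j$) in Lemma~\ref{Strang_Lemma}, I would derive the coefficient identity $(\widetilde\mu_j-\mu)c_j=-(w,\widetilde u_j)$. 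Since $|\widetilde\mu_j-\mu|\ge\delta_i$ for $j\neq i$, this yields $\sum_{j\neq i}c_j^2\le\delta_i^{-2}\sum_{j=1}^m(w,\widetilde u_j)^2$.

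The crux is to bound $\sum_{j=1}^m(w,\widetilde u_j)^2$ without the lossy term-by-term Cauchy--Schwarz (which would introduce $\sum_j\widetilde\mu_j$). I would instead rewrite $(w,\widetilde u_j)=(Tw,\widetilde u_j)_A=(\mathcal P_\mathcal KTw,\widetilde u_j)_A$ and apply Parseval in the $A$-inner product to get $\sum_{j=1}^m(w,\widetilde u_j)^2=\|\mathcal P_\mathcal KTw\|_A^2$. Setting $z:=\mathcal P_\mathcal KTw\in\mathcal K$ and exploiting $\mathcal P_\mathcal Kw=0$ together with the $A$-self-adjointness of $T$ and of $\mathcal P_\mathcal K$, I would estimate
\begin{equation*}
\|z\|_A^2=(Tz,w)_A=\big((I-\mathcal P_\mathcal K)Tz,w\big)_A\le\|(I-\mathcal P_\mathcal K)Tz\|_A\,\|w\|_A\le\eta_\mathcal K\|z\|_2\,\|w\|_A\le\sqrt{\widetilde\mu_1}\,\eta_\mathcal K\|z\|_A\|w\|_A,
\end{equation*}
so that $\|\mathcal P_\mathcal KTw\|_A=\|z\|_A\le\sqrt{\widetilde\mu_1}\,\eta_\mathcal K\|w\|_A$. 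Combining with the previous paragraph gives $\sum_{j\neq i}c_j^2\le\frac{\widetilde\mu_1}{\delta_i^2}\eta_\mathcal K^2\|w\|_A^2$, and the Pythagorean identity then produces \eqref{Energy_Error_Estimate}. I expect this transfer step --- identifying the sum as $\|\mathcal P_\mathcal KTw\|_A^2$ and pushing the projection through the $A$-self-adjoint operators so that the definition of $\eta_\mathcal K$ applies to $z\in\mathcal K$ --- to be the main difficulty.

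For \eqref{L2_Error_Estimate} I would start from $\|u-E_m^{(i)}u\|_2\le\|w\|_2+\big\|\sum_{j\neq i}c_j\widetilde u_j\big\|_2$. The same self-adjointness manipulation applied to $\|w\|_2^2=(w,Tw)_A=\big(w,(I-\mathcal P_\mathcal K)Tw\big)_A\le\eta_\mathcal K\|w\|_2\|w\|_A$ gives $\|w\|_2\le\eta_\mathcal K\|w\|_A$, while the $L^2$-orthogonality of the $\widetilde u_j$ gives $\big\|\sum_{j\neq i}c_j\widetilde u_j\big\|_2^2=\sum_{j\neq i}\widetilde\mu_jc_j^2\le\widetilde\mu_1\sum_{j\neq i}c_j^2\le\frac{\widetilde\mu_1^2}{\delta_i^2}\eta_\mathcal K^2\|w\|_A^2$. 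Adding the two contributions yields the constant $(1+\widetilde\mu_1/\delta_i)\eta_\mathcal K=\eta_{\mathcal K,i}$ times $\|w\|_A$, and since $\|w\|_A\le\|u-E_m^{(i)}u\|_A$ by the Pythagorean identity, \eqref{L2_Error_Estimate} follows.
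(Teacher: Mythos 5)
Your proof is correct, and most of it coincides with the paper's: the same $A$-orthogonal splitting $u-E_m^{(i)}u=(I-\mathcal P_\mathcal K)u+\sum_{j\neq i}c_j\widetilde u_j$ with the Pythagorean identity, the same coefficient relation $(\widetilde\mu_j-\mu)c_j=-(w,\widetilde u_j)$ (which is exactly Lemma \ref{Strang_Lemma} rewritten in terms of $\mu$), the same $L^2$-orthogonality $\|\widetilde u_j\|_2^2=\widetilde\mu_j$, the same duality estimate $\|w\|_2\le\eta_\mathcal K\|w\|_A$ as in \eqref{L2_Energy_Estiate}, and the same assembly of the $L^2$ bound. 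The one genuine divergence is how $\sum_{j\neq i}c_j^2$ is controlled. The paper writes $(w,\widetilde u_j)^2=\widetilde\mu_j\,(w,\widetilde u_j/\|\widetilde u_j\|_2)^2$, bounds $\widetilde\mu_j\le\widetilde\mu_1$ termwise, and applies Bessel's inequality to the $L^2$-orthonormal set $\{\widetilde u_j/\|\widetilde u_j\|_2\}$ followed by one more application of the duality estimate; see \eqref{Equality_5}. So your concern that the direct route is ``lossy'' and would produce $\sum_j\widetilde\mu_j$ is unfounded --- Bessel, not term-by-term Cauchy--Schwarz, is what is used, and it already delivers the constant $\widetilde\mu_1\eta_\mathcal K^2/\delta_i^2$. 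Your substitute --- identifying $\sum_j(w,\widetilde u_j)^2=\|\mathcal P_\mathcal KTw\|_A^2$ by Parseval in the $A$-inner product and then proving $\|\mathcal P_\mathcal KTw\|_A\le\sqrt{\widetilde\mu_1}\,\eta_\mathcal K\|w\|_A$ via the $A$-self-adjointness of $T$ and $\mathcal P_\mathcal K$ together with $\|z\|_2\le\sqrt{\widetilde\mu_1}\|z\|_A$ on $\mathcal K$ --- is valid and yields the identical constant. It costs a little more machinery for the same payoff, but it has the merit of packaging the two separate appearances of $\eta_\mathcal K$ and $\widetilde\mu_1$ into a single clean operator bound on $\mathcal P_\mathcal KT$ restricted to the $A$-orthogonal complement of $\mathcal K$; either version is acceptable.
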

\begin{proof}
Similarly to the duality argument in the finite element method, the following inequality holds
\begin{eqnarray}\label{L2_Energy_Estiate}
&&\|(I-\mathcal P_\mathcal K)u\|_2=\sup_{\|g\|_2=1}((I-\mathcal P_{\mathcal K})u,g)
=\sup_{\|g\|_2=1}((I-\mathcal P_{\mathcal K})u,Tg)_A \nonumber\\
&&=\sup_{\|g\|_2=1}((I-\mathcal P_{\mathcal K})u,(I-\mathcal P_\mathcal K)Tg)_A
\leq \eta_\mathcal K\|(I-\mathcal P_\mathcal K)u\|_A.
\end{eqnarray}

Since $(I-E_m^{(i)})\mathcal P_\mathcal Ku\in\mathcal K$ and
$(I-E_m^{(i)})\mathcal P_\mathcal Ku\perp_A \widetilde u_i$,
the following orthogonal expansion holds
\begin{eqnarray}\label{Orthogonal_Decomposition}
(I-E_m^{(i)})\mathcal P_\mathcal Ku=\sum_{j\neq i}\alpha_j\widetilde u_j,
\end{eqnarray}
where $\alpha_j=(\mathcal P_\mathcal Ku,\widetilde u_j)_A$. From Lemma \ref{Strang_Lemma}, we have
\begin{eqnarray}\label{Alpha_Estimate}
\alpha_j=(\mathcal P_\mathcal Ku,\widetilde u_j)_A = \widetilde\lambda_j\big(\mathcal P_\mathcal Ku,\widetilde u_j\big)
=\frac{\widetilde\lambda_j\lambda}{\widetilde\lambda_j-\lambda}\big(u-\mathcal P_\mathcal Ku,\widetilde u_j\big)
=\frac{1}{\mu-\widetilde\mu_j}\big(u-\mathcal P_\mathcal Ku,\widetilde u_j\big).
\end{eqnarray}
From the property of the eigenvectors $\widetilde u_1, ..., \widetilde u_m$, the following equalities hold
\begin{eqnarray*}
1 = (\widetilde u_j,\widetilde u_j)_A = \widetilde\lambda_j(\widetilde u_j,\widetilde u_j)=\widetilde\lambda_j\|\widetilde u_j\|_2^2,
\end{eqnarray*}
which leads to the following property
\begin{eqnarray}\label{Equality_u_j}
\|\widetilde u_j\|_2^2=\frac{1}{\widetilde\lambda_j}=\widetilde\mu_j.
\end{eqnarray}
From (\ref{Projection_method}) and the definitions of the eigenvectors $\widetilde u_1, ..., \widetilde u_m$,
we have the following equalities
\begin{eqnarray}\label{Orthonormal_Basis}
(\widetilde u_j,\widetilde u_k)_A=\delta_{jk},\ \ \ \ \ \Big(\frac{\widetilde u_j}{\|\widetilde u_j\|_2},\frac{\widetilde u_k}{\|\widetilde u_k\|_2}\Big)=\delta_{jk},\ \ \ 1\leq j,k\leq m.
\end{eqnarray}
Then from (\ref{Orthogonal_Decomposition}), (\ref{Alpha_Estimate}), (\ref{Equality_u_j}) and (\ref{Orthonormal_Basis}),
the following estimates hold
\begin{eqnarray}\label{Equality_5}
&&\|(I-E_m^{(i)})\mathcal P_\mathcal Ku\|_A^2=\Big\|\sum_{j\neq i}\alpha_j\widetilde u_j\Big\|_A^2
=\sum_{j\neq i}\alpha_j^2=\sum_{j\neq i}\Big(\frac{1}{\mu-\widetilde\mu_j}\Big)^2\big(u-\mathcal P_\mathcal Ku,\widetilde u_j\big)^2\nonumber\\
&&\leq\frac{1}{\delta_i^2}\sum_{j\neq i}\|\widetilde u_j\|_2^2\Big(u-\mathcal P_\mathcal Ku,\frac{\widetilde u_j}{\|\widetilde u_j\|_2}\Big)^2
=\frac{1}{\delta_i^2}\sum_{j\neq i}\widetilde\mu_j\Big(u-\mathcal P_\mathcal Ku,\frac{\widetilde u_j}{\|\widetilde u_j\|_2}\Big)^2\nonumber\\
&&\leq \frac{\widetilde\mu_1}{\delta_i^2}\|u-\mathcal P_\mathcal Ku\|_2^2.
\end{eqnarray}
From (\ref{L2_Energy_Estiate}), (\ref{Equality_5}) and the orthogonal property
$u-\mathcal P_\mathcal Ku\perp_A(I-E_m^{(i)})\mathcal P_\mathcal Ku$,
we have the following error estimate
\begin{eqnarray}
&&\|u-E_m^{(i)}u\|_A^2=\|u-\mathcal P_\mathcal Ku\|_A^2
+\|(I-E_m^{(i)})\mathcal P_\mathcal Ku\|_A^2\nonumber\\
&&\leq\|(I-\mathcal P_\mathcal K)u\|_A^2
+\frac{\widetilde\mu_1}{\delta_i^2}\|u-\mathcal P_\mathcal Ku\|_2^2
\leq \Big(1+\frac{\widetilde\mu_1}{\delta_i^2}\eta_\mathcal K^2\Big)\|(I-\mathcal P_\mathcal K)u\|_A^2.
\end{eqnarray}
This is the desired result (\ref{Energy_Error_Estimate}).

Similarly, from (\ref{Orthogonal_Decomposition}), (\ref{Alpha_Estimate}), (\ref{Equality_u_j}) and (\ref{Orthonormal_Basis}),
the following estimates hold
\begin{eqnarray}\label{Equality_6}
&&\|(I-E_m^{(i)})\mathcal P_\mathcal Ku\|_2^2 = \Big\|\sum_{j\neq i}\alpha_j\widetilde u_j\Big\|_2^2
=\sum_{j\neq i}\alpha_j^2\|\widetilde u_j\|_2^2 \nonumber\\
&&= \sum_{j\neq i}\Big(\frac{1}{\mu-\widetilde \mu_j}\Big)^2\big(u-\mathcal P_\mathcal Ku,\widetilde u_j\big)^2\|\widetilde u_j\|_2^2
\leq\Big(\frac{1}{\delta_i}\Big)^2\sum_{j\neq i}\Big(u-\mathcal P_\mathcal Ku,\frac{\widetilde u_j}{\|\widetilde u_j\|_2}\Big)^2\|\widetilde u_j\|_2^4\nonumber\\
&&=\Big(\frac{1}{\delta_i}\Big)^2\sum_{j\neq i}\widetilde\mu_j^2\Big(u-\mathcal P_\mathcal Ku,\frac{\widetilde u_j}{\|\widetilde u_j\|_2}\Big)^2
\leq \Big(\frac{\widetilde\mu_1}{\delta_i}\Big)^2\|u-\mathcal P_\mathcal Ku\|_2^2.
\end{eqnarray}
Combining (\ref{L2_Energy_Estiate}) and (\ref{Equality_6}) leads to the following inequalities
\begin{eqnarray}\label{Equality_8}
\|(I-E_m^{(i)})\mathcal P_\mathcal Ku\|_2 \leq \frac{\widetilde\mu_1}{\delta_i}\|u-\mathcal P_\mathcal Ku\|_2
\leq \frac{\widetilde\mu_1}{\delta_i}\eta_{\mathcal K}\|(I-\mathcal P_\mathcal K)u\|_A.
\end{eqnarray}
From (\ref{L2_Energy_Estiate}), (\ref{Equality_8}) and the triangle inequality,
we have the following error estimate for the eigenvector approximation in the $L^2$-norm
\begin{eqnarray}\label{Inequality_6}
&&\|u-E_m^{(i)}u\|_2\leq \|u-\mathcal P_\mathcal Ku\|_2
+ \|(I-E_m^{(i)})\mathcal P_\mathcal Ku\|_2\nonumber\\
&\leq&\|u-\mathcal P_\mathcal Ku\|_2
+ \frac{\widetilde\mu_1}{\delta_i}\eta_\mathcal K\|(I-\mathcal P_\mathcal K)u\|_A\nonumber\\
&\leq&\Big(1+\frac{\widetilde\mu_1}{\delta_i}\Big)\eta_\mathcal K\|(I-\mathcal P_\mathcal K)u\|_A
\leq \eta_{\mathcal K,i}\|u-E_m^{(i)}u\|_A.
\end{eqnarray}
This is the second desired result (\ref{L2_Error_Estimate}) and the proof is complete.
\end{proof}

In the following analysis, we state the error estimates for multi eigenvalue approximations. For simplicity of notation,
we consider the special case that the first $k$ eigenvalues $\lambda_1\leq\cdots\leq\lambda_k$ are closest to
the eigenvalue approximations $\widetilde\lambda_1\leq\cdots\leq\widetilde\lambda_k$. Then the corresponding
eigenvector approximations $\widetilde u_1,\cdots,\widetilde u_k$ have the error estimates stated in the next theorem.
\begin{theorem}\label{Error_Estimate_Theorem_k}
We define spectral projection
$E_{m,k}:\mathbb R^n\mapsto{\rm span}\{\widetilde u_1, ..., \widetilde u_k\}$
corresponding to the first $k$ eigenvector approximations
$\widetilde u_1,\cdots,\widetilde u_k$ as follows
\begin{eqnarray*}
(E_{m,k}w,\widetilde u_i)_A = (w,\widetilde u_i)_A,
\ \ \ \ \forall w\in \mathbb R^n\ \ {\rm and}\ \ i=1,\cdots,k.
\end{eqnarray*}
Then the associated exact eigenvectors $u_1$, $...$, $u_k$ of
problem (\ref{Eigenvalue_Equation_0}) have the following error estimate
\begin{eqnarray}\label{Energy_Error_Estimate_k}
\|u_i-E_{m,k}u_i\|_A&\leq& \sqrt{1+\frac{\widetilde\mu_{k+1}}{\delta_{k,i}^2}\eta_\mathcal K^2}
\|(I-\mathcal P_\mathcal K)u_i\|_A,\ \ \ 1\leq i\leq k,
\end{eqnarray}
where $\delta_{k,i}$ is defined as follows
\begin{eqnarray}\label{Definition_Gamma_Delta_k}
\delta_{k,i} := \min_{k<j\leq m}|\widetilde \mu_j-\mu_i|=\min_{k<j\leq m}
\Big|\frac{1}{\widetilde\lambda_j}-\frac{1}{\lambda_i}\Big|.
\end{eqnarray}
Furthermore, these $k$ exact eigenvectors have the following error estimate in the $L^2$-norm
\begin{eqnarray}\label{L2_Error_Estimate_k}
\|u_i-E_{m,k}u_i\|_2 &\leq&\eta_{\mathcal K,k,i}\|u_i-E_{m,k}u_i\|_A,\ \ \ 1\leq i\leq k,
\end{eqnarray}
where $\eta_{\mathcal K,k,i}$ is defined as follows
\begin{eqnarray}
\eta_{\mathcal K,k,i}= \Big(1+\frac{\widetilde\mu_{k+1}}{\delta_{k,i}}\Big)\eta_\mathcal K.
\end{eqnarray}
\end{theorem}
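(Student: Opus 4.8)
The plan is to run the argument of Theorem~\ref{Error_Estimate_Theorem} once for each fixed index $i\in\{1,\dots,k\}$, with the single Ritz vector $\widetilde u_i$ replaced by the whole block $\{\widetilde u_1,\dots,\widetilde u_k\}$ and the spectral projector $E_m^{(i)}$ replaced by $E_{m,k}$. The only structural change is that the part of the error living inside $\mathcal K$ is now $A$-orthogonal to all of $\widetilde u_1,\dots,\widetilde u_k$, so the defect expands only over the tail indices $j>k$; since the $\widetilde\mu_j$ are ordered decreasingly, every such $\widetilde\mu_j$ is bounded by $\widetilde\mu_{k+1}$ rather than by $\widetilde\mu_1$. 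Likewise the relevant spectral gap shrinks from the full separation $\delta_i$ to the one-sided tail separation $\delta_{k,i}$, since no separation within the block is needed. This is exactly the sharpening recorded in the target constants, and it is the sole reason the block estimate is genuinely better than applying Theorem~\ref{Error_Estimate_Theorem} to each eigenvector separately.

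Concretely, I would first note that, because $\{\widetilde u_1,\dots,\widetilde u_k\}$ is $A$-orthonormal and lies in $\mathcal K$, the defining relation of $E_{m,k}$ gives $E_{m,k}w=\sum_{\ell=1}^k(w,\widetilde u_\ell)_A\widetilde u_\ell$ and $E_{m,k}=E_{m,k}\mathcal P_\mathcal K$. Writing $u_i-E_{m,k}u_i=(u_i-\mathcal P_\mathcal Ku_i)+(I-E_{m,k})\mathcal P_\mathcal Ku_i$, the first summand is $A$-orthogonal to $\mathcal K$ while the second lies in $\mathcal K$, so Pythagoras in the $A$-norm yields $\|u_i-E_{m,k}u_i\|_A^2=\|(I-\mathcal P_\mathcal K)u_i\|_A^2+\|(I-E_{m,k})\mathcal P_\mathcal Ku_i\|_A^2$, exactly as in Theorem~\ref{Error_Estimate_Theorem}. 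Next I would expand $(I-E_{m,k})\mathcal P_\mathcal Ku_i=\sum_{j>k}\alpha_j\widetilde u_j$ with $\alpha_j=(\mathcal P_\mathcal Ku_i,\widetilde u_j)_A$, apply Lemma~\ref{Strang_Lemma} to the exact pair $(\lambda_i,u_i)$ to get $\alpha_j=(\mu_i-\widetilde\mu_j)^{-1}(u_i-\mathcal P_\mathcal Ku_i,\widetilde u_j)$, and use $|\mu_i-\widetilde\mu_j|\ge\delta_{k,i}$ for $j>k$ by definition (\ref{Definition_Gamma_Delta_k}). Reusing (\ref{Equality_u_j}) and (\ref{Orthonormal_Basis}) together with the monotonicity $\widetilde\mu_j\le\widetilde\mu_{k+1}$ for $j>k$ then gives $\|(I-E_{m,k})\mathcal P_\mathcal Ku_i\|_A^2\le(\widetilde\mu_{k+1}/\delta_{k,i}^2)\|u_i-\mathcal P_\mathcal Ku_i\|_2^2$, and the duality bound (\ref{L2_Energy_Estiate}) applied to $u_i$ converts the right-hand side into $\eta_\mathcal K^2\|(I-\mathcal P_\mathcal K)u_i\|_A^2$, producing (\ref{Energy_Error_Estimate_k}). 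The $L^2$ estimate (\ref{L2_Error_Estimate_k}) follows the route of (\ref{Equality_6})--(\ref{Inequality_6}): the extra factor $\|\widetilde u_j\|_2^2=\widetilde\mu_j$ in the $L^2$ norm of the expansion upgrades $\widetilde\mu_{k+1}$ to $\widetilde\mu_{k+1}^2$, and a triangle inequality with (\ref{L2_Energy_Estiate}) delivers the constant $\eta_{\mathcal K,k,i}=(1+\widetilde\mu_{k+1}/\delta_{k,i})\eta_\mathcal K$.

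The parts needing the most care are bookkeeping rather than analysis: I must verify that the defect $(I-E_{m,k})\mathcal P_\mathcal Ku_i$ is genuinely $A$-orthogonal to every $\widetilde u_\ell$ with $\ell\le k$ — which follows at once from the projector identity $(E_{m,k}w,\widetilde u_\ell)_A=(w,\widetilde u_\ell)_A$ — so that, since $\{\widetilde u_1,\dots,\widetilde u_m\}$ is an $A$-orthonormal basis of $\mathcal K$, the expansion really runs only over $j\in\{k+1,\dots,m\}$; and I must check that Lemma~\ref{Strang_Lemma} applies for each $i$, i.e. $\mu_i\ne\widetilde\mu_j$ for $j>k$, which is precisely the nondegeneracy encoded in $\delta_{k,i}>0$. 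Once these two points are in place, every inequality of Theorem~\ref{Error_Estimate_Theorem} transfers verbatim under the substitutions $\widetilde\mu_1\mapsto\widetilde\mu_{k+1}$ and $\delta_i\mapsto\delta_{k,i}$, so no genuinely new estimate is required.
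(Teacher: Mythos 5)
Your proposal is correct and follows essentially the same route as the paper: the same splitting $u_i-E_{m,k}u_i=(I-\mathcal P_\mathcal K)u_i+(I-E_{m,k})\mathcal P_\mathcal Ku_i$, the same tail expansion over $j>k$ with coefficients controlled via Lemma \ref{Strang_Lemma}, and the same substitutions $\widetilde\mu_1\mapsto\widetilde\mu_{k+1}$, $\delta_i\mapsto\delta_{k,i}$ fed through the duality bound (\ref{L2_Energy_Estiate}). The extra bookkeeping checks you flag (the expansion running only over $j>k$, and $\delta_{k,i}>0$ for Lemma \ref{Strang_Lemma}) are exactly the points the paper uses implicitly, so nothing further is needed.
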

\begin{proof}
Since $(I-E_{m,k})\mathcal P_\mathcal Ku_i\in\mathcal K$ and
$(I-E_{m,k})\mathcal P_\mathcal Ku_i\in {\rm span}\{\widetilde u_{k+1},...,\widetilde u_m\}$,
the following orthogonal expansion holds
\begin{eqnarray}\label{Orthogonal_Decomposition_k}
(I-E_{m,k})\mathcal P_\mathcal Ku_i=\sum_{j=k+1}^m\alpha_j\widetilde u_j.
\end{eqnarray}
Then from (\ref{Alpha_Estimate}), (\ref{Equality_u_j}), (\ref{Orthonormal_Basis}) and (\ref{Orthogonal_Decomposition_k}),
we have the following estimates
\begin{eqnarray}\label{Equality_4_i}
&&\|(I-E_{m,k})\mathcal P_\mathcal Ku_i\|_A^2 = \Big\|\sum_{j=k+1}^m\alpha_j\widetilde u_j\Big\|_A^2
= \sum_{j=k+1}^m\alpha_j^2\nonumber\\
&&=\sum_{j=k+1}^m\Big(\frac{1}{\mu_i-\widetilde\mu_j}\Big)^2\big(u_i-\mathcal P_\mathcal Ku_i,\widetilde u_j\big)^2
\leq\frac{1}{\delta_{k,i}^2}\sum_{j=k+1}^m\|\widetilde u_j\|_2^2\Big(u_i-\mathcal P_\mathcal Ku_i,\frac{\widetilde u_j}{\|\widetilde u_j\|_2}\Big)^2\nonumber\\
&&=\frac{1}{\delta_{k,i}^2}\sum_{j=k+1}^m\widetilde\mu_j\Big(u_i-\mathcal P_\mathcal Ku_i,\frac{\widetilde u_j}{\|\widetilde u_j\|_2}\Big)^2
\leq \frac{\widetilde\mu_{k+1}}{\delta_{k,i}^2}\|u_i-\mathcal P_\mathcal Ku_i\|_2^2.
\end{eqnarray}
Similarly combining (\ref{L2_Energy_Estiate}) and (\ref{Equality_4_i}) leads to the following inequality
\begin{eqnarray}\label{Equality_5_k}
\|(I-E_{m,k})\mathcal P_\mathcal Ku_i\|_A^2
\leq\frac{\widetilde\mu_{k+1}}{\delta_{k,i}^2}\eta_\mathcal K^2\|(I-\mathcal P_\mathcal K)u_i\|_A^2.
\end{eqnarray}
From (\ref{Equality_5_k}) and the orthogonal property
$u_i-\mathcal P_\mathcal Ku_i\perp_A (I-E_{m,k})\mathcal P_\mathcal Ku_i$,
we have the following error estimate
\begin{eqnarray*}
\|u_i-E_{m,k}u_i\|_A^2&=&\|u_i-\mathcal P_\mathcal Ku_i\|_A^2
+\|(I-E_{m,k})\mathcal P_\mathcal Ku_i\|_A^2\nonumber\\
&\leq&\Big(1+\frac{\widetilde\mu_{k+1}}{\delta_{k,i}^2}\eta_\mathcal K^2\Big)
\|(I-\mathcal P_\mathcal K)u_i\|_A^2.
\end{eqnarray*}
This is the desired result (\ref{Energy_Error_Estimate_k}).

Similarly, from (\ref{Alpha_Estimate}), (\ref{Equality_u_j}), (\ref{Orthonormal_Basis}) and (\ref{Orthogonal_Decomposition_k}),
we have the following estimates
\begin{eqnarray*}
&&\|(I-E_{m,k})\mathcal P_\mathcal Ku_i\|_2^2 = \|\sum_{j=k+1}^m\alpha_j\widetilde u_j\|_2^2
= \sum_{j=k+1}^m\alpha_j^2\|\widetilde u_j\|_2^2\nonumber\\
&&=\sum_{j=k+1}^m\Big(\frac{1}{\mu_i-\widetilde\mu_j}\Big)^2\big(u_i-\mathcal P_\mathcal Ku_i,\widetilde u_j\big)^2\|\widetilde u_j\|_2^2
\leq\frac{1}{\delta_{k,i}^2}\sum_{j=k+1}^m\|\widetilde u_j\|_2^4\Big(u_i-\mathcal P_\mathcal Ku_i,\frac{\widetilde u_j}{\|\widetilde u_j\|_2}\Big)^2\nonumber\\
&&=\frac{1}{\delta_{k,i}^2}\sum_{j=k+1}^m\widetilde\mu_j^2\Big(u_i-\mathcal P_\mathcal Ku_i,\frac{\widetilde u_j}{\|\widetilde u_j\|_2}\Big)^2
\leq \frac{\widetilde\mu_{k+1}^2}{\delta_{k,i}^2}\|u_i-\mathcal P_\mathcal Ku_i\|_2^2,
\end{eqnarray*}
which leads to the inequality
\begin{eqnarray}\label{Equality_8_k}
\|(I-E_{m,k})\mathcal P_\mathcal Ku_i\|_2 \leq \frac{\widetilde\mu_{k+1}}{\delta_{k,i}}\|u_i-\mathcal P_\mathcal Ku_i\|_2.
\end{eqnarray}
From (\ref{L2_Energy_Estiate}), (\ref{Equality_8_k}) and the triangle inequality, we have
the following error estimate for the eigenvector approximation in the $L^2$-norm
\begin{eqnarray*}\label{Inequality_11}
&&\|u_i-E_{m,k}u_i\|_2\leq \|u_i-\mathcal P_\mathcal Ku_i\|_2 + \|(I-E_{m,k})
\mathcal P_\mathcal Ku_i\|_2\nonumber\\
&\leq&\Big(1+\frac{\widetilde\mu_{k+1}}{\delta_{k,i}}\Big)
\|(I-\mathcal P_\mathcal K)u_i\|_2\leq \Big(1+\frac{\widetilde\mu_{k+1}}{\delta_{k,i}}\Big)\eta_\mathcal K\|(I-\mathcal P_\mathcal K)u_i\|_A\nonumber\\
&\leq& \Big(1+\frac{\widetilde\mu_{k+1}}{\delta_{k,i}}\Big)\eta_\mathcal K\|u_i-E_{m,k}u_i\|_A.
\end{eqnarray*}
This is the second desired result (\ref{L2_Error_Estimate_k}) and the proof is complete.
\end{proof}
\section{Inverse power method on a subspace}

As an application of the error estimates stated in the previous section, we give an
algebraic error estimate for the inverse power method on a special subspace which is
constructed by enriching the current eigenspace approximation with a space $\mathcal K$.
For more details about this special space, please refer to \cite{LinXie,Xie_JCP}.

For some given eigenvector approximations $u_1^{(\ell)}, ..., u_k^{(\ell)}$ which are
approximations for the first $k$ eigenvectors $u_1,...,u_k$, we do the following
inverse power iteration on a subspace:
\begin{algorithm}\label{Algorithm_1} {\bf Inverse power method on a subspace}\\
For given eigenvector approximations $u_1^{(\ell)}, ..., u_k^{(\ell)}$, do following two steps
\begin{enumerate}
\item Define the subspace $\mathcal K_k^{(\ell+1)}
:=\mathcal K+{\rm span}\{u_1^{(\ell)},...,u_k^{(\ell)}\}$
and solve the following eigenvalue problem:
Find $\widetilde u_i^{(\ell+1)}\in \mathcal K_k^{(\ell+1)}$ and $\lambda_i^{(\ell+1)}\in\mathbb R$
such that $\|\widetilde u_i^{(\ell+1)}\|_A=1$ and
\begin{eqnarray}\label{Subspace_Eigen_Problem}
(A \widetilde u_i^{(\ell+1)}, v) = \lambda_i^{(\ell+1)}(\widetilde u_i^{(\ell+1)},v),
\ \ \ \ \forall v\in \mathcal K_k^{(\ell+1)}.
\end{eqnarray}
Solve this eigenvalue problem to obtain the new first $k$ eigenvector approximations
$\widetilde u_1^{(\ell+1)},...,\widetilde u_k^{(\ell+1)}$.
\item Solve the following $k$ linear equations:
\begin{eqnarray}\label{Linear_Equation}
A u_i^{(\ell+1)}=\lambda_i^{(\ell+1)}\widetilde u_i^{(\ell+1)},\ \ \ \ i=1,...,k.
\end{eqnarray}
\end{enumerate}
We obtain the new eigenvector approximations $u_1^{(\ell+1)},..., u_k^{(\ell+1)}$ as the output.
\end{algorithm}
We define spectral projection $E_{m,k}^{(\ell)}:\mathbb R^n\mapsto {\rm span}\{u_1^{(\ell)}, ..., u_k^{(\ell)}\}$
corresponding to the eigenvector approximations $u_1^{(\ell)},\cdots, u_k^{(\ell)}$ as follows
\begin{eqnarray*}
(E_{m,k}^{(\ell)}w,u_i^{(\ell)})_A = (w, u_i^{(\ell)})_A,
\ \ \ \ \forall w\in \mathbb R^n\ \ {\rm and}\ \ i=1,\cdots,k.
\end{eqnarray*}
Then the spectral projections  $E_{m,k}^{(\ell+1)}$, $\tilde E_{m,k}^{(\ell+1)}$ and $E_k$ can also be defined
corresponding to the spaces ${\rm span}\{u_1^{(\ell+1)},..., u_k^{(\ell+1)}\}$,
${\rm span}\{\widetilde u_1^{(\ell+1)},..., \widetilde u_k^{(\ell+1)}\}$ and ${\rm span}\{u_1,..., u_k\}$, respectively.
Based on Theorems \ref{Error_Estimate_Theorem} and \ref{Error_Estimate_Theorem_k}, we give the following error
estimate for Algorithm \ref{Algorithm_1}.
\begin{theorem}\label{Theorem_Error_Estimate_Inverse}
There exist exact eigenvectors $u_1,..., u_k$ such that
the resultant eigenvector approximations $u_1^{(\ell+1)},..., u_k^{(\ell+1)}$ have the following error estimate
\begin{eqnarray}\label{Error_Estimate_Inverse}
\Big(\sum_{i=1}^k\|u_i-E_{m,k}^{(\ell+1)}u_i\|_A^2\Big)^{1/2}
\leq \theta_{\mathcal K_k^{(\ell+1)}}\sqrt{\frac{\lambda_k}{\lambda_{k+1}}}
\Big(\sqrt{\lambda_k^{(\ell+1)}}\eta_{\mathcal K_k^{(\ell+1)},k,k}\Big)
\Big(\sum_{i=1}^k\|u_i-E_{m,k}^{(\ell)}u_i\|_A^2\Big)^{1/2},
\end{eqnarray}
where $\theta_{\mathcal K_k^{(\ell+1)}}$ and $\eta_{\mathcal K_k^{(\ell+1)},k,k}$ are defined as follows
\begin{eqnarray}
\theta_{\mathcal K_k^{(\ell+1)}}&:=&
\sqrt{1+\frac{\mu_{k+1}^{(\ell+1)}\eta_{\mathcal K_k^{(\ell+1)}}^2}{\big(\delta_{k,k}^{(\ell+1)}\big)^2}},\label{Definition_Inverse_2}\\
\eta_{\mathcal K_k^{(\ell+1)},k,i} &:=& \Big(1+\frac{\mu_{k+1}^{(\ell+1)}}{\delta_{k,i}^{(\ell+1)}}\Big)
\eta_{\mathcal K_k^{(\ell+1)}}, \ \ \ i=1, ..., k,\label{Definition_Inverse_1}
\end{eqnarray}
with
\begin{eqnarray}
\delta_{k,i}^{(\ell+1)}:=\min_{k<j\leq m}|\mu_j^{(\ell+1)}-\mu_i|,\ i=1, ..., k \ {\rm and} \ \
\mu_i^{(\ell+1)}:=\frac{1}{\lambda_i^{(\ell+1)}}, \  i=1, ..., m.
\end{eqnarray}
\end{theorem}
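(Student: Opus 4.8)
The plan is to exploit that $E_{m,k}^{(\ell+1)}$ is the $A$-orthogonal projection onto $W:=\mathrm{span}\{u_1^{(\ell+1)},\dots,u_k^{(\ell+1)}\}$, so that $\|u_i-E_{m,k}^{(\ell+1)}u_i\|_A\le\|u_i-\chi\|_A$ for every candidate $\chi\in W$, and to build a good $\chi$ out of the inverse power step. From the linear solve (\ref{Linear_Equation}) one has $u_l^{(\ell+1)}=\lambda_l^{(\ell+1)}T\widetilde u_l^{(\ell+1)}$, hence $W=T\,\mathrm{span}\{\widetilde u_1^{(\ell+1)},\dots,\widetilde u_k^{(\ell+1)}\}$. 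The proof then separates into two ingredients: (i) the Ritz vectors $\widetilde u_i^{(\ell+1)}$ on $\mathcal K_k^{(\ell+1)}$ already approximate the exact eigenvectors, with errors controlled by Theorem \ref{Error_Estimate_Theorem_k}; and (ii) the application of $T=A^{-1}$ contracts the high-frequency part of this error by the spectral gap, which is where the factor $\sqrt{\lambda_k/\lambda_{k+1}}$ is produced.

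For ingredient (ii), the heart of the argument, I would first select the exact eigenvectors $u_1,\dots,u_k$ (the statement grants this freedom) and expand everything in the $A$-orthonormal eigenbasis $u_1,\dots,u_n$ of $A$, writing $E_k$ for the $A$-orthogonal projector onto $U_k:=\mathrm{span}\{u_1,\dots,u_k\}$. Using $u_l^{(\ell+1)}=\lambda_l^{(\ell+1)}\sum_j(\widetilde u_l^{(\ell+1)},u_j)_A\,\mu_j u_j$, I would choose coefficients $\beta_1,\dots,\beta_k$ so that $\chi_i:=\sum_l\beta_l u_l^{(\ell+1)}\in W$ matches $u_i$ \emph{exactly} in the low-frequency directions $u_1,\dots,u_k$; this amounts to inverting the $k\times k$ good block $\big((\widetilde u_l^{(\ell+1)},u_j)_A\big)_{l,j\le k}$, which is nonsingular once $V$ is close to $U_k$. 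By construction the residual $u_i-\chi_i$ lies in $\mathrm{span}\{u_{k+1},\dots,u_n\}$, so each component carries a factor $\mu_j=1/\lambda_j\le 1/\lambda_{k+1}$. Extracting one power of $1/\lambda_{k+1}$ in the energy norm and recognizing the remaining weighted sum $\sum_{j>k}\mu_j(\cdot)^2$ as an $L^2$-norm leaves a bound of the form $\|u_i-\chi_i\|_A\le\lambda_{k+1}^{-1/2}\|(I-E_k)\xi_i\|_2$, where $\xi_i\in V$ is the element with $E_k\xi_i=\lambda_i u_i$; since $V$ is close to $U_k$, $\|(I-E_k)\xi_i\|_2$ is in turn essentially $\lambda_i$ times the $L^2$ Ritz error of $\widetilde u_i^{(\ell+1)}$, producing the prefactor $\lambda_i\lambda_{k+1}^{-1/2}$. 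Splitting $\lambda_i\lambda_{k+1}^{-1/2}=\sqrt{\lambda_i/\lambda_{k+1}}\,\sqrt{\lambda_i}$ and using $\lambda_i\le\lambda_k$ together with the upper-bound property (\ref{Upper_Bound}), $\lambda_i\le\lambda_i^{(\ell+1)}\le\lambda_k^{(\ell+1)}$, yields exactly $\sqrt{\lambda_k/\lambda_{k+1}}\,\sqrt{\lambda_k^{(\ell+1)}}$.

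It then remains to feed in ingredient (i). Applying Theorem \ref{Error_Estimate_Theorem_k} on $\mathcal K=\mathcal K_k^{(\ell+1)}$ bounds the $L^2$ Ritz error by $\eta_{\mathcal K_k^{(\ell+1)},k,k}$ times the energy Ritz error, and the latter by $\theta_{\mathcal K_k^{(\ell+1)}}$ times $\|(I-\mathcal P_{\mathcal K_k^{(\ell+1)}})u_i\|_A$; this supplies the two remaining constants in (\ref{Error_Estimate_Inverse}). Finally, since $\mathrm{span}\{u_1^{(\ell)},\dots,u_k^{(\ell)}\}\subseteq\mathcal K_k^{(\ell+1)}$ and $\mathcal P_{\mathcal K_k^{(\ell+1)}}$ is the $A$-best approximation, the admissible candidate $E_{m,k}^{(\ell)}u_i\in\mathcal K_k^{(\ell+1)}$ gives $\|(I-\mathcal P_{\mathcal K_k^{(\ell+1)}})u_i\|_A\le\|u_i-E_{m,k}^{(\ell)}u_i\|_A$, converting the projection error into the previous-iterate error. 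Squaring, summing over $i=1,\dots,k$, and collecting the three constants then yields (\ref{Error_Estimate_Inverse}).

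The step I expect to be the main obstacle is ingredient (ii): the naive candidate $\lambda_i T\widetilde u_i^{(\ell+1)}$ amplifies the low-frequency part of the Ritz error by $\mu_1=1/\lambda_1$ and destroys the gap factor, so one genuinely needs the low-frequency cancellation built into $\chi_i$, that is, the inversion of the good block and the aggregation over $i$. Carrying this out rigorously—controlling $\|(I-E_k)\xi_i\|_2$ by the $L^2$ Ritz errors uniformly in $i$, and justifying the selection of the exact eigenvectors when eigenvalues are clustered—is the delicate bookkeeping, whereas the spectral-gap estimate and the appeal to Theorem \ref{Error_Estimate_Theorem_k} are routine once the candidate is in place.
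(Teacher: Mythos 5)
Your overall skeleton matches the paper's: bound the Ritz step on $\mathcal K_k^{(\ell+1)}$ by Theorem \ref{Error_Estimate_Theorem_k}, absorb $\|(I-\mathcal P_{\mathcal K_k^{(\ell+1)}})u_i\|_A\le\|u_i-E_{m,k}^{(\ell)}u_i\|_A$ via $u_i^{(\ell)}\in\mathcal K_k^{(\ell+1)}$, and extract $\sqrt{\lambda_k/\lambda_{k+1}}$ from the inverse-power step through the gap inequality $\|v-E_kv\|_A^2\ge\lambda_{k+1}\|v-E_kv\|_2^2$. But your mechanism for the inverse-power step is genuinely different from the paper's, and it is exactly there that the argument has a real gap. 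You construct an explicit competitor $\chi_i=\sum_l\beta_lu_l^{(\ell+1)}$ by inverting the $k\times k$ overlap block $C=\big((\widetilde u_l^{(\ell+1)},u_j)_A\big)_{l,j\le k}$, and you then assert that $\|(I-E_k)\xi_i\|_2$ is ``essentially'' $\lambda_i$ times the $L^2$ Ritz error. Made precise, that step reads $\|(I-E_k)\xi_i\|_2\le\lambda_i\,\|C^{-1}\|\,\big(\sum_l\|(I-E_k)\widetilde u_l^{(\ell+1)}\|_2^2\big)^{1/2}$ (or similar), which injects two quantities absent from the clean bound (\ref{Error_Estimate_Inverse}): the norm of $C^{-1}$ (only controllable as $1+O(\text{error})$, and only after assuming $C$ is nonsingular, which you do not establish), and the quantity $\|(I-E_k)\widetilde u_l^{(\ell+1)}\|_2$, which is the projection of the Ritz vector onto the exact eigenspace rather than the quantity $\|u_l-\tilde E_{m,k}^{(\ell+1)}u_l\|_2$ that Theorem \ref{Error_Estimate_Theorem_k} actually controls; these two are related by another angle argument and by a change of normalization ($A$-normalized versus $L^2$-normalized vectors) that you do not carry out. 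As written, your route proves a bound with an extra unquantified multiplicative factor, not the stated estimate.

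The paper avoids the block inversion entirely by two symmetry-of-principal-angles identities. First, (\ref{Equality_9}) converts $\sum_i\|u_i-E_{m,k}^{(\ell+1)}u_i\|_A^2$ into $\sum_i\|\alpha_iu_i^{(\ell+1)}-E_k(\alpha_iu_i^{(\ell+1)})\|_A^2$, i.e.\ it projects the iterates onto the exact eigenspace instead of the exact eigenvectors onto the iterate space, so no competitor needs to be built. A trace computation using $Au_i^{(\ell+1)}=\lambda_i^{(\ell+1)}\widetilde u_i^{(\ell+1)}$, $Au_j=\lambda_ju_j$ and $\pi_k\widetilde{\mathbf U}^{(\ell+1)}=\mathbf UQ$ then turns the $A$-inner product into $\lambda_i^{(\ell+1)}\big(\widetilde u_i^{(\ell+1)}-\pi_k\widetilde u_i^{(\ell+1)},\,u_i^{(\ell+1)}-E_ku_i^{(\ell+1)}\big)$, to which Cauchy--Schwarz and the gap inequality apply; here the auxiliary matrix $Q$ cancels identically ($QQ^{-1}\Lambda^{-1}Q\Lambda^{(\ell+1)}$ collapses), which is precisely the cancellation your $\|C^{-1}\|$ fails to achieve. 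Second, (\ref{Equality_10_2}) converts the resulting sum $\sum_i\lambda_i^{(\ell+1)}\|(I-\pi_k)\widetilde u_i^{(\ell+1)}\|_2^2$ into $\sum_i\lambda_i\|(I-\tilde\pi_{m,k}^{(\ell+1)})u_i\|_2^2$, which is the quantity Theorem \ref{Error_Estimate_Theorem_k} bounds, with the $\sqrt{\lambda_i}$ and $\sqrt{\lambda_i^{(\ell+1)}}$ rescalings handling the normalization mismatch you glossed over. To repair your proof you would either need to reproduce these two identities (at which point you have the paper's proof) or carry the $\|C^{-1}\|$ and angle-symmetry factors explicitly and show they can be absorbed, which the theorem as stated does not allow.
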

\begin{proof}
From Theorem \ref{Error_Estimate_Theorem_k}, there exist exact eigenvectors $u_1, ..., u_k$ such that
the following error estimates for the eigenvector approximations
$\widetilde u_1^{(\ell+1)},...,\widetilde u_k^{(\ell+1)}$ hold for $i=1, ..., k$
\begin{eqnarray}
\|u_i-\tilde E_{m,k}^{(\ell+1)}u_i\|_A
&\leq& \sqrt{1+\frac{\mu_{k+1}^{(\ell+1)}\eta_{\mathcal K_k^{(\ell+1)}}^2}{\big(\delta_{k,i}^{(\ell+1)}\big)^2}}
\|(I-\mathcal P_{\mathcal K_k^{(\ell+1)}})u_i\|_A\nonumber\\
&\leq&  \sqrt{1+\frac{\mu_{k+1}^{(\ell+1)}\eta_{\mathcal K_k^{(\ell+1)}}^2}{\big(\delta_{k,i}^{(\ell+1)}\big)^2}}
\|u_i-E_{m,k}^{(\ell)}u_i\|_A,
\end{eqnarray}
and
\begin{eqnarray}\label{Inequality_13}
&&\|u_i-\tilde E_{m,k}^{(\ell+1)}u_i\|_2\leq \eta_{\mathcal K_k^{(\ell+1)},k,i}
\|u_i-\tilde E_{m,k}^{(\ell+1)}u_i\|_A\nonumber\\
&&\ \ \ \ \leq\eta_{\mathcal K_k^{(\ell+1)},k,i}
\sqrt{1+\frac{\mu_{k+1}^{(\ell+1)}\eta_{\mathcal K_k^{(\ell+1)}}^2}{\big(\delta_{k,i}^{(\ell+1)}\big)^2}}
\|u_i-E_{m,k}^{(\ell)}u_i\|_A.
\end{eqnarray}
Let $\alpha_i=1/\|u_i^{(\ell+1)}\|_A$.
From (\ref{Subspace_Eigen_Problem}) and (\ref{Linear_Equation}), we have following inequalities
\begin{eqnarray*}
1= \lambda_i^{(\ell+1)}(\widetilde u_i^{(\ell+1)},\widetilde u_i^{(\ell+1)})
= (A u_i^{(\ell+1)},\widetilde u_i^{(\ell+1)})\leq \|u_i^{(\ell+1)}\|_A\|\widetilde u_i^{(\ell+1)}\|_A=\frac{1}{\alpha_i}.
\end{eqnarray*}
Then each $\alpha_i$ has the following estimate
\begin{eqnarray}\label{Alpha_i_Estimate}
\alpha_i\leq 1,\ \ \ \ {\rm for}\ i=1,...,k.
\end{eqnarray}
For the analysis, we define the $L^2$-projections $\pi_k$ and $\tilde\pi_{m,k}^{(\ell+1)}$
corresponding to the spaces ${\rm span}\{u_1$,..., $u_k\}$ and
${\rm span}\{\widetilde u_1^{(\ell+1)},..., \widetilde u_k^{(\ell+1)}\}$, respectively.
Then since $\|u_i\|_A=\|\alpha_iu_i^{(\ell+1)}\|_A=1$ and
$\|\sqrt{\lambda_i}u_i\|_2=\|\sqrt{\lambda_i^{(\ell+1)}}\widetilde u_i^{(\ell+1)}\|_2=1$, there exist following equalities
\begin{eqnarray}\label{Equality_9}
\sum_{i=1}^k\|u_i-E_{m,k}^{(\ell+1)}u_i\|_A^2
= \sum_{i=1}^k\|\alpha_i u_i^{(\ell+1)}-E_k(\alpha_i u_i^{(\ell+1)})\|_A^2
\end{eqnarray}
and
\begin{eqnarray}\label{Equality_10_2}
\sum_{i=1}^k\Big\|\sqrt{\lambda_i^{(\ell+1)}}\widetilde u_i^{(\ell+1)}
-\pi_k \big(\sqrt{\lambda_i^{(\ell+1)}}\widetilde u_i^{(\ell+1)}\big)\Big\|_2^2
=\sum_{i=1}^k\|\sqrt{\lambda_i}u_i-\tilde\pi_{m,k}^{(\ell+1)}\big(\sqrt{\lambda_i}u_i\big)\|_2^2.
\end{eqnarray}
From the definition of the spectral projection $E_k$, it is easy to know the following property holds
\begin{eqnarray}\label{Inequality_Spectral_Projection}
\frac{\|u_i^{(\ell+1)}-E_ku_i^{(\ell+1)}\|_A^2}{\|u_i^{(\ell+1)}-E_ku_i^{(\ell+1)}\|_2^2}
\geq \lambda_{k+1}.
\end{eqnarray}
We define $\mathbf U=[u_1,...,u_k]\in\mathbb R^{n\times k}$,
$\widetilde{\mathbf U}^{(\ell+1)}=[\widetilde u_1^{(\ell+1)},...,
\widetilde u_k^{(\ell+1)}]\in \mathbb R^{n\times k}$
and $\mathbf U^{(\ell+1)}$ $=[u_1^{(\ell+1)}$, ..., $u_k^{(\ell+1)}]\in \mathbb R^{n\times k}$.
It is easy to know that there exists a nonsingular matrix $Q\in \mathbb R^{k\times k}$ such that
\begin{eqnarray}\label{Equality_10}
\pi_k\widetilde{\mathbf U}^{(\ell+1)}=[\pi_k\widetilde u_1^{(\ell+1)},...,
\pi_k\widetilde u_k^{(\ell+1)}]=\mathbf U Q.
\end{eqnarray}
From the definition of spectral projection  $E_k$, the following equation holds
\begin{eqnarray}\label{Equality_11}
(\mathbf U^{(\ell+1)}-E_k\mathbf U^{(\ell+1)})^TA u_j=0,\ \ \ \ j=1,...,k.
\end{eqnarray}
For the following proof, we define three diagonal matrices
$D={\rm diag}(\alpha_1,...,\alpha_k)$, $\Lambda={\rm diag}(\lambda_1,...,\lambda_k)$
and $\Lambda^{(\ell+1)}={\rm diag}(\lambda_1^{(\ell+1)},...,\lambda_k^{(\ell+1)})$.

Combining (\ref{Linear_Equation}), (\ref{Equality_9}),
(\ref{Inequality_Spectral_Projection}), (\ref{Equality_10}) and (\ref{Equality_11})
leads to the following estimate
\begin{eqnarray}\label{Inequality_12}
&&\sum_{i=1}^k\|u_i-E_{m,k}^{(\ell+1)}u_i\|_A^2 = \sum_{i=1}^k
\|\alpha_i u_i^{(\ell+1)}-E_k(\alpha_i u_i^{(\ell+1)})\|_A^2\nonumber\\
&=&{\rm trace}\left((\mathbf U^{(\ell+1)}D-E_k\mathbf U^{(\ell+1)}D)^T
A(\mathbf U^{(\ell+1)}D-E_k\mathbf U^{(\ell+1)}D)\right)\nonumber\\
&=&{\rm trace}\left(D^T(\mathbf U^{(\ell+1)}-E_k\mathbf U^{(\ell+1)})^T
A(\mathbf U^{(\ell+1)}-\pi_k\widetilde{\mathbf U}^{(\ell+1)}Q^{-1}\Lambda^{-1}Q
\Lambda^{(\ell+1)})D\right)\nonumber\\
&=&{\rm trace}\left(D(\mathbf U^{(\ell+1)}-E_k\mathbf U^{(\ell+1)})^T
A(\mathbf U^{(\ell+1)}-\mathbf UQQ^{-1}\Lambda^{-1}Q\Lambda^{(\ell+1)})D\right)\nonumber\\
&=&{\rm trace}\left(D(\mathbf U^{(\ell+1)}-E_k\mathbf U^{(\ell+1)})^T
(\widetilde{\mathbf U}^{(\ell+1)}\Lambda^{(\ell+1)}
-\mathbf U\Lambda QQ^{-1}\Lambda^{-1}Q\Lambda^{(\ell+1)})D\right)\nonumber\\
&=&{\rm trace}\left(D(\mathbf U^{(\ell+1)}-E_k\mathbf U^{(\ell+1)})^T
(\widetilde{\mathbf U}^{(\ell+1)}\Lambda^{(\ell+1)}
-\mathbf UQ\Lambda^{(\ell+1)})D\right)\nonumber\\
&=&{\rm trace}\left(D(\mathbf U^{(\ell+1)}-E_k\mathbf U^{(\ell+1)})^T
(\widetilde{\mathbf U}^{(\ell+1)}\Lambda^{(\ell+1)}
-\pi_k\widetilde{\mathbf U}^{(\ell+1)}\Lambda^{(\ell+1)})D\right)\nonumber\\
&=&\sum_{i=1}^k\alpha_i^2\lambda_i^{(\ell+1)}(\widetilde u_i^{(\ell+1)}
-\pi_k\widetilde u_i^{(\ell+1)},u_i^{(\ell+1)}-E_ku_i^{(\ell+1)})\nonumber\\
&\leq& \sum_{i=1}^k\alpha_i\frac{\lambda_i^{(\ell+1)}}{\sqrt{\lambda_{k+1}}}
\Big\|\widetilde u_i^{(\ell+1)}-\pi_k \widetilde u_i^{(\ell+1)}\Big\|_2
\Big\|\alpha_iu_i^{(\ell+1)}-E_k(\alpha_iu_i^{(\ell+1)})\Big\|_A.
\end{eqnarray}
From (\ref{Inequality_13}), (\ref{Equality_10_2}) and (\ref{Inequality_12}), we have
\begin{eqnarray*}
&&\sum_{i=1}^k\|u_i-E_{m,k}^{(\ell+1)}u_i\|_A^2=\sum_{i=1}^k
\|\alpha_i u_i^{(\ell+1)}-E_k(\alpha_i u_i^{(\ell+1)})\|_A^2\nonumber\\
&\leq& \sum_{i=1}^k\alpha_i^2\frac{\lambda_i^{(\ell+1)}}{\lambda_{k+1}}
\Big\|\sqrt{\lambda_i^{(\ell+1)}}\widetilde u_i^{(\ell+1)}
-\pi_k\big(\sqrt{\lambda_i^{(\ell+1)}}\widetilde u_i^{(\ell+1)}\big)\Big\|_2^2\nonumber\\
&\leq&\frac{\lambda_k^{(\ell+1)}}{\lambda_{k+1}} \sum_{i=1}^k
\Big\|\sqrt{\lambda_i^{(\ell+1)}}\widetilde u_i^{(\ell+1)}
-\pi_k \big(\sqrt{\lambda_i^{(\ell+1)}}\widetilde u_i^{(\ell+1)}\big)\Big\|_2^2\nonumber\\
&=&\frac{\lambda_k^{(\ell+1)}}{\lambda_{k+1}}\sum_{i=1}^k
\|\sqrt{\lambda_i}u_i-\tilde\pi_{m,k}^{(\ell+1)}
\big(\sqrt{\lambda_i}u_i\big)\|_2^2\leq \frac{\lambda_k^{(\ell+1)}}{\lambda_{k+1}}\sum_{i=1}^k
\lambda_i\|u_i-\tilde E_{m,k}^{(\ell+1)}u_i\|_2^2\nonumber\\
&\leq&\frac{\lambda_k^{(\ell+1)}}{\lambda_{k+1}} \sum_{i=1}^k
\lambda_i\Big(1+\frac{\mu_{k+1}^{(\ell+1)}\eta_{\mathcal K_k^{(\ell+1)}}^2}{\big(\delta_{k,i}^{(\ell+1)}\big)^2}\Big)
\eta_{\mathcal K_k^{(\ell+1)},k,i}^2\|u_i-E_{m,k}^{(\ell)}u_i\|_A^2\nonumber\\
&\leq&\frac{\lambda_k^{(\ell+1)}\lambda_k}{\lambda_{k+1}}
\Big(1+\frac{\mu_{k+1}^{(\ell+1)}\eta_{\mathcal K_k^{(\ell+1)}}^2}{\big(\delta_{k,k}^{(\ell+1)}\big)^2}\Big)
\eta_{\mathcal K_k^{(\ell+1)},k,k}^2\sum_{i=1}^k\|u_i-E_{m,k}^{(\ell)}u_i\|_A^2.
\end{eqnarray*}
This is the desired result (\ref{Error_Estimate_Inverse}) and the proof is complete.
\end{proof}
\begin{remark}
In the convergence result (\ref{Error_Estimate_Inverse}),
the term $\sqrt{\lambda_k/\lambda_{k+1}}$ comes from the inverse power iteration.
Different from the normal inverse power method, there exists the
term $\sqrt{\lambda_k^{(\ell+1)}}\eta_{\mathcal K_k^{(\ell+1)},k,k}$
which depends on the subspace $\mathcal K$. We can accelerate the inverse power iteration largely if
the subspace $\mathcal K$ can make the term
$\sqrt{\lambda_k^{(\ell+1)}}\eta_{\mathcal K_k^{(\ell+1)},k,k}$ be small (less than $1$).
\end{remark}
\begin{remark}
In this paper, we are only concerned with the error estimates for the eigenvector
approximation since the error estimates for the eigenvalue approximation
can be easily deduced from the following error expansion
\begin{eqnarray*}\label{rayexpan}
0\leq \widehat{\lambda}_i-\lambda_i
=\frac{\big(A(u_i-\psi),u_i-\psi\big)}{(\psi,\psi)}-\lambda_i
\frac{\big(u_i-\psi,u_i-\psi\big)}{(\psi,\psi)}\leq \frac{\|u_i-\psi\|_A^2}{\|\psi\|_2^2},
\end{eqnarray*}
where $\psi$ is the eigenvector approximation for the exact eigenvector $u_i$ and
\begin{eqnarray*}
\widehat{\lambda}_i=\frac{(A\psi,\psi)}{(\psi,\psi)}.
\end{eqnarray*}
\end{remark}
It is obvious that the parallel computing method can be used for Step 2 of Algorithm \ref{Algorithm_1}
since each linear equation can be solved independently. Furthermore, in order to design a complete parallel
scheme for eigenvalue problems, we give another version of the inverse power method for only one (may be not the
smallest one) eigenpair.

We start from an eigenvector approximation $u_i^{(\ell)}$ which is closest
to an exact eigenvector denoted by $u$. Then the new version of the inverse power iteration on a subspace
can be defined as follows:
\begin{algorithm}\label{Algorithm_2} {\bf Inverse power method on a subspace for one eigenvector}\\
For given eigenvector approximation $u_i^{(\ell)}$, do the following two steps
\begin{enumerate}
\item Define the subspace $\mathcal K^{(\ell+1)}
:=\mathcal K+{\rm span}\{u_i^{(\ell)}\}$
and solve the following eigenvalue problem:
Find $\widetilde u_i^{(\ell+1)}\in \mathcal K^{(\ell+1)}$ and $\lambda_i^{(\ell+1)}\in\mathbb R$
such that $\|\widetilde u_i^{(\ell+1)}\|_A=1$ and
\begin{eqnarray}\label{Subspace_Eigen_Problem_2}
(A \widetilde u_i^{(\ell+1)}, v) = \lambda_i^{(\ell+1)}(\widetilde u_i^{(\ell+1)},v),
\ \ \ \ \forall v\in \mathcal K^{(\ell+1)}.
\end{eqnarray}
Solve this eigenvalue problem to obtain a new eigenvector approximation
$\widetilde u_i^{(\ell+1)}$ which has the biggest orthogonal projection in the direction of $u_i^{(\ell)}$.
\item Solve the following linear equation:
\begin{eqnarray}\label{Linear_Equation_2}
A u_i^{(\ell+1)}=\lambda_i^{(\ell+1)}\widetilde u_i^{(\ell+1)}.
\end{eqnarray}
\end{enumerate}
We obtain the new eigenvector approximation $u_i^{(\ell+1)}$ as the output.
\end{algorithm}
We define spectral projection $E_{m}^{(i,\ell)}:\mathbb R^n\mapsto {\rm span}\{u_i^{(\ell)}\}$
corresponding to the eigenvector approximation $u_i^{(\ell)}$ as follows
\begin{eqnarray*}
(E_{m}^{(i,\ell)}w,u_i^{(\ell)})_A = (w, u_i^{(\ell)})_A,
\ \ \ \ \forall w\in \mathbb R^n.
\end{eqnarray*}
Then the spectral projections  $E_{m}^{(i,\ell+1)}$, $\tilde E_{m}^{(i,\ell+1)}$ and $E$ can also be defined
corresponding to eigenvectors $u_i^{(\ell+1)}$, $\widetilde u_i^{(\ell+1)}$ and $u$, respectively.
Based on Theorem \ref{Error_Estimate_Theorem}, we give the following error estimate for Algorithm \ref{Algorithm_2}.
\begin{theorem}\label{Theorem_Error_Estimate_Inverse_2}
There exists an exact eigenvector $u$ such that the resultant eigenvector approximation
$u_i^{(\ell+1)}$ has the following error estimate
\begin{eqnarray}\label{Error_Estimate_Inverse_2}
&&\|u-E_{m}^{(i,\ell+1)}u\|_A\leq \theta_{\mathcal K^{(\ell+1)}}\sqrt{\frac{\lambda\lambda_i^{(\ell+1)}}{\lambda_1}}
\eta_{\mathcal K^{(\ell+1)},i}\|u-E_{m}^{(i,\ell)}u\|_A,
\end{eqnarray}
where $\theta_{\mathcal K^{(\ell+1)}}$ and $\eta_{\mathcal K^{(\ell+1)},i}$ are defined as follows
\begin{eqnarray}
\theta_{\mathcal K^{(\ell+1)}}:=\sqrt{1+\frac{\mu_1^{(\ell+1)}\eta_{\mathcal K^{(\ell+1)}}^2}{\big(\delta_i^{(\ell+1)}\big)^2}},\ \ \ \ \ \
\eta_{\mathcal K^{(\ell+1)},i} := \Big(1+\frac{1}{\delta_{i}^{(\ell+1)}}\Big)\eta_{\mathcal K^{(\ell+1)}}.\label{Definition_Inverse_1_2}
\end{eqnarray}
with
\begin{eqnarray}
\delta_i^{(\ell+1)}:=\min_{j\neq i}|\mu_j^{(\ell+1)}-\mu|,\ \ \ {\rm and}\ \ \
\mu_j^{(\ell+1)} = \frac{1}{\lambda_j^{(\ell+1)}},\ \ \ j=1, ..., m.
\end{eqnarray}
\end{theorem}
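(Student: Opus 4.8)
The plan is to mirror the argument of Theorem \ref{Theorem_Error_Estimate_Inverse}, but specialized to a single vector, so that the trace and matrix bookkeeping there collapses into scalar identities. Throughout I normalize the exact eigenvector by $\|u\|_A=1$ (hence $\|\sqrt{\lambda}\,u\|_2=1$), and recall from \eqref{Subspace_Eigen_Problem_2} that $\|\widetilde u_i^{(\ell+1)}\|_A=1$ forces $\lambda_i^{(\ell+1)}\|\widetilde u_i^{(\ell+1)}\|_2^2=1$, i.e. $\|\sqrt{\lambda_i^{(\ell+1)}}\,\widetilde u_i^{(\ell+1)}\|_2=1$. Setting $\alpha:=1/\|u_i^{(\ell+1)}\|_A$, the same Cauchy--Schwarz estimate as in Theorem \ref{Theorem_Error_Estimate_Inverse}, namely $1=\lambda_i^{(\ell+1)}\|\widetilde u_i^{(\ell+1)}\|_2^2=(Au_i^{(\ell+1)},\widetilde u_i^{(\ell+1)})=(u_i^{(\ell+1)},\widetilde u_i^{(\ell+1)})_A\le\|u_i^{(\ell+1)}\|_A=1/\alpha$, yields $\alpha\le1$.

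First I would invoke Theorem \ref{Error_Estimate_Theorem} on the enriched space $\mathcal K^{(\ell+1)}=\mathcal K+{\rm span}\{u_i^{(\ell)}\}$, applied to the approximate eigenpair whose $\mu_i^{(\ell+1)}$ is closest to $\mu$. This delivers both the energy estimate $\|u-\tilde E_m^{(i,\ell+1)}u\|_A\le\theta_{\mathcal K^{(\ell+1)}}\|(I-\mathcal P_{\mathcal K^{(\ell+1)}})u\|_A$ and the companion $L^2$ estimate $\|u-\tilde E_m^{(i,\ell+1)}u\|_2\le\eta_{\mathcal K^{(\ell+1)},i}\|u-\tilde E_m^{(i,\ell+1)}u\|_A$, with the constants in \eqref{Definition_Inverse_1_2}. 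Since ${\rm span}\{u_i^{(\ell)}\}\subseteq\mathcal K^{(\ell+1)}$ and $\mathcal P_{\mathcal K^{(\ell+1)}}$ is the $A$-orthogonal projector onto the larger space, the best-approximation property gives $\|(I-\mathcal P_{\mathcal K^{(\ell+1)}})u\|_A\le\|u-E_m^{(i,\ell)}u\|_A$, so that both right-hand sides are controlled by the previous-step error $\|u-E_m^{(i,\ell)}u\|_A$.

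The core is the single-vector analog of the chain \eqref{Inequality_12}, where two ``sine of the angle'' symmetries do the heavy lifting. Because $u$ and $\alpha u_i^{(\ell+1)}$ are both $A$-unit vectors, $\|u-E_m^{(i,\ell+1)}u\|_A=\|\alpha u_i^{(\ell+1)}-E(\alpha u_i^{(\ell+1)})\|_A$ (each side equals $\sqrt{1-(u,\alpha u_i^{(\ell+1)})_A^2}$), and likewise, with $\pi$ and $\tilde\pi_m^{(i,\ell+1)}$ the $L^2$-projectors onto ${\rm span}\{u\}$ and ${\rm span}\{\widetilde u_i^{(\ell+1)}\}$, one has $\sqrt{\lambda_i^{(\ell+1)}}\|\widetilde u_i^{(\ell+1)}-\pi\widetilde u_i^{(\ell+1)}\|_2=\sqrt{\lambda}\,\|u-\tilde\pi_m^{(i,\ell+1)}u\|_2$. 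Next, using that the residual $u_i^{(\ell+1)}-Eu_i^{(\ell+1)}$ is $A$-orthogonal to ${\rm span}\{u\}$, I would subtract a suitable multiple of $u$ and pass to the $L^2$ inner product via the linear equation \eqref{Linear_Equation_2}, $Au_i^{(\ell+1)}=\lambda_i^{(\ell+1)}\widetilde u_i^{(\ell+1)}$, to reach the identity $\|u-E_m^{(i,\ell+1)}u\|_A^2=\alpha^2\lambda_i^{(\ell+1)}\big(u_i^{(\ell+1)}-Eu_i^{(\ell+1)},\,\widetilde u_i^{(\ell+1)}-\pi\widetilde u_i^{(\ell+1)}\big)$. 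A Cauchy--Schwarz in $L^2$ together with the Rayleigh-quotient bound $\|u_i^{(\ell+1)}-Eu_i^{(\ell+1)}\|_2\le\lambda_1^{-1/2}\|u_i^{(\ell+1)}-Eu_i^{(\ell+1)}\|_A$ --- the single-vector analog of \eqref{Inequality_Spectral_Projection}, in which $\lambda_1$ rather than $\lambda_{k+1}$ appears because the residual is orthogonal only to the one vector $u$ --- then lets me cancel one factor of $\|u-E_m^{(i,\ell+1)}u\|_A$ and apply $\alpha\le1$.

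Assembling the pieces, I would convert $\|\widetilde u_i^{(\ell+1)}-\pi\widetilde u_i^{(\ell+1)}\|_2$ back to the exact-eigenvector residual through the $L^2$ symmetry above, bound $\|u-\tilde\pi_m^{(i,\ell+1)}u\|_2\le\|u-\tilde E_m^{(i,\ell+1)}u\|_2$ by $L^2$ best approximation, and finally insert the $L^2$ estimate from Theorem \ref{Error_Estimate_Theorem}; the factors $\lambda_i^{(\ell+1)}$, $\sqrt{\lambda}$ and $\lambda_1^{-1/2}$ then combine into $\sqrt{\lambda\lambda_i^{(\ell+1)}/\lambda_1}$, which is exactly \eqref{Error_Estimate_Inverse_2}. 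I expect the main obstacle to be the careful handling of the three competing normalizations --- the $A$-unit vectors $u$ and $\alpha u_i^{(\ell+1)}$ against the $L^2$-unit vectors $\sqrt{\lambda}\,u$ and $\sqrt{\lambda_i^{(\ell+1)}}\,\widetilde u_i^{(\ell+1)}$ --- inside the two symmetry identities, and in verifying that the passage from the $A$-inner product to the $L^2$-inner product via \eqref{Linear_Equation_2} is precisely what produces the inverse-iteration gain with $\lambda_1$ in the denominator.
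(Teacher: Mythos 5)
Your proposal is correct and follows essentially the same route as the paper's own proof: invoke Theorem \ref{Error_Estimate_Theorem} on the enriched space $\mathcal K^{(\ell+1)}$, establish $\alpha\le 1$ by Cauchy--Schwarz, use the two normalization symmetries to pass between $u$, $\alpha u_i^{(\ell+1)}$ and $\sqrt{\lambda}\,u$, $\sqrt{\lambda_i^{(\ell+1)}}\,\widetilde u_i^{(\ell+1)}$, and convert the $A$-inner product to the $L^2$-inner product via $Au_i^{(\ell+1)}=\lambda_i^{(\ell+1)}\widetilde u_i^{(\ell+1)}$ together with the Rayleigh-quotient bound $\|\cdot\|_A^2/\|\cdot\|_2^2\ge\lambda_1$. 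The single-vector identity you isolate, $\|u-E_m^{(i,\ell+1)}u\|_A^2=\alpha^2\lambda_i^{(\ell+1)}\big(u_i^{(\ell+1)}-Eu_i^{(\ell+1)},\widetilde u_i^{(\ell+1)}-\pi\widetilde u_i^{(\ell+1)}\big)$, is exactly the paper's chain (\ref{Inequality_12_2}), so no further comparison is needed.
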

\begin{proof}
From Theorem \ref{Error_Estimate_Theorem}, there exists an exact eigenvector $u$ closest to the eigenvector
approximation $u_i^{(\ell)}$ such that following error estimates for the eigenvector
$\widetilde u_i^{(\ell+1)}$ hold
\begin{eqnarray}
\|u-\tilde E_{m}^{(i,\ell+1)}u\|_A
&\leq& \sqrt{1+\frac{\mu_1^{(\ell+1)}\eta_{\mathcal K^{(\ell+1)}}^2}{\big(\delta_i^{(\ell+1)}\big)^2}}
\|(I-\mathcal P_{\mathcal K^{(\ell+1)}})u\|_A\nonumber\\
&\leq&\sqrt{1+\frac{\mu_1^{(\ell+1)}\eta_{\mathcal K^{(\ell+1)}}^2}{\big(\delta_i^{(\ell+1)}\big)^2}}\|u-E_{m}^{(i,\ell)}u\|_A,
\end{eqnarray}
and
\begin{eqnarray}\label{Inequality_13_2}
\|u-\tilde E_{m}^{(i,\ell+1)}u\|_2 &\leq& \eta_{\mathcal K^{(\ell+1)},i}
\|u-\tilde E_{m}^{(i,\ell+1)}u\|_A\nonumber\\
&\leq&\eta_{\mathcal K^{(\ell+1)},i}
\sqrt{1+\frac{\mu_1^{(\ell+1)}\eta_{\mathcal K^{(\ell+1)}}^2}{\big(\delta_i^{(\ell+1)}\big)^2}}
\|u-E_{m}^{(i,\ell)}u\|_A.
\end{eqnarray}
Let $\alpha_i=1/\|u_i^{(\ell+1)}\|_A$. Similarly, from (\ref{Subspace_Eigen_Problem_2}) and (\ref{Linear_Equation_2}),
we have the following inequality
\begin{eqnarray*}
1= \lambda_i^{(\ell+1)}(\widetilde u_i^{(\ell+1)},\widetilde u_i^{(\ell+1)})
= (A u_i^{(\ell+1)},\widetilde u_i^{(\ell+1)})\leq \|u_i^{(\ell+1)}\|_A\|\widetilde u_i^{(\ell+1)}\|_A=\frac{1}{\alpha_i},
\end{eqnarray*}
which leads to the estimate $\alpha_i\leq 1$.
For the analysis, we define the $L^2$-projections $\pi$ and $\tilde\pi_{m}^{(i,\ell+1)}$
corresponding to the spaces ${\rm span}\{u\}$ and
${\rm span}\{ \widetilde u_i^{(\ell+1)}\}$, respectively.
Then from $\|u\|_A=\|\alpha_iu_i^{(\ell+1)}\|_A=1$ and
$\|\sqrt{\lambda}u\|_2=\|\sqrt{\lambda_i^{(\ell+1)}}\widetilde u_i^{(\ell+1)}\|_2=1$, we have following equalities
\begin{eqnarray}\label{Equality_9_2}
\|u-E_{m}^{(i,\ell+1)}u\|_A= \|\alpha_i u_i^{(\ell+1)}-E(\alpha_i u_i^{(\ell+1)})\|_A
\end{eqnarray}
and
\begin{eqnarray}\label{Equality_10_2_2}
\Big\|\sqrt{\lambda_i^{(\ell+1)}}\widetilde u_i^{(\ell+1)}
-\pi \big(\sqrt{\lambda_i^{(\ell+1)}}\widetilde u_i^{(\ell+1)}\big)\Big\|_2
=\|\sqrt{\lambda}u-\tilde\pi_{m}^{(i,\ell+1)}\big(\sqrt{\lambda}u\big)\|_2.
\end{eqnarray}
From the definition of the eigenvalue, it is easy to know the following property holds
\begin{eqnarray}\label{Inequality_Spectral_Projection_2}
\frac{\|u_i^{(\ell+1)}-Eu_i^{(\ell+1)}\|_A^2}{\|u_i^{(\ell+1)}-Eu_i^{(\ell+1)}\|_2^2}
\geq \lambda_1.
\end{eqnarray}
Combining (\ref{Linear_Equation_2}), (\ref{Equality_9_2}) and (\ref{Inequality_Spectral_Projection_2}) leads to the following estimate
\begin{eqnarray}\label{Inequality_12_2}
&&\|u-E_{m}^{(i,\ell+1)}u\|_A^2 = \|\alpha_i u_i^{(\ell+1)}-E(\alpha_i u_i^{(\ell+1)})\|_A^2\nonumber\\
&=& \Big(\alpha_i u_i^{(\ell+1)}-E(\alpha_i u_i^{(\ell+1)}), \alpha_i u_i^{(\ell+1)}-E(\alpha_i u_i^{(\ell+1)})\Big)_A\nonumber\\
&=& \Big(\alpha_i u_i^{(\ell+1)}-E(\alpha_i u_i^{(\ell+1)}), \alpha_i u_i^{(\ell+1)}-\frac{\lambda_i^{(\ell+1)}}{\lambda}\pi(\alpha_i
\widetilde u_i^{(\ell+1)})\Big)_A\nonumber\\
&=& \lambda_i^{(\ell+1)}\Big(\alpha_i u_i^{(\ell+1)}-E(\alpha_i u_i^{(\ell+1)}),
\alpha_i u_i^{(\ell+1)}-\pi(\alpha_i \widetilde u_i^{(\ell+1)})\Big)\nonumber\\
&\leq& \lambda_i^{(\ell+1)} \|\alpha_i \widetilde u_i^{(\ell+1)}-\pi(\alpha_i \widetilde u_i^{(\ell+1)})\|_2
\|\alpha_i u_i^{(\ell+1)}-E(\alpha_i u_i^{(\ell+1)})\|_2\nonumber\\
&=& \alpha_i\frac{\lambda_i^{(\ell+1)}}{\sqrt{\lambda_1}} \| \widetilde u_i^{(\ell+1)}-\pi \widetilde u_i^{(\ell+1)}\|_2
\|\alpha_i u_i^{(\ell+1)}-E(\alpha_i u_i^{(\ell+1)})\|_A\nonumber\\
&=& \alpha_i\frac{\lambda_i^{(\ell+1)}}{\sqrt{\lambda_1}}
\|\widetilde u_i^{(\ell+1)}-\pi \widetilde u_i^{(\ell+1)}\|_2\|u-E_{m}^{(i,\ell+1)}u\|_A.
\end{eqnarray}
From (\ref{Inequality_13_2}), (\ref{Equality_10_2_2}) and (\ref{Inequality_12_2}), we have
\begin{eqnarray*}
&&\|u-E_{m}^{(i,\ell+1)}u\|_A\leq \alpha_i\frac{\lambda_i^{(\ell+1)}}{\sqrt{\lambda_1}}
\| \widetilde u_i^{(\ell+1)}-\pi \widetilde u_i^{(\ell+1)}\|_2\nonumber\\
&\leq&\alpha_i\sqrt{\frac{\lambda_i^{(\ell+1)}}{\lambda_1}}
\|\sqrt{\lambda_i^{(\ell+1)}}\widetilde u_i^{(\ell+1)}-\pi(\sqrt{\lambda_i^{(\ell+1)}}\widetilde u_i^{(\ell+1)})\|_2\nonumber\\
&=& \alpha_i\sqrt{\frac{\lambda_i^{(\ell+1)}}{\lambda_1}}\|\sqrt{\lambda} u-\tilde\pi_{m}^{(i,\ell+1)}(\sqrt{\lambda}u)\|_2
\leq \alpha_i\sqrt{\frac{\lambda\lambda_i^{(\ell+1)}}{\lambda_1}}\|u-E_m^{(i,\ell)}u\|_2\nonumber\\
&\leq&\eta_{\mathcal K^{(\ell+1)},i}\sqrt{\frac{\lambda\lambda_i^{(\ell+1)}}{\lambda_1}}
\sqrt{1+\frac{\mu_1^{(\ell+1)}\eta_{\mathcal K^{(\ell+1)}}^2}{\big(\delta_i^{(\ell+1)}\big)^2}}\|u-E_m^{(i,\ell)}u\|_A.
\end{eqnarray*}
This is the desired result (\ref{Error_Estimate_Inverse_2}) and the proof is complete.
\end{proof}
From the estimate (\ref{Error_Estimate_Inverse_2}), in order to guarantee the convergence of Algorithm \ref{Algorithm_2},
we need to choose the subspace $\mathcal K$ properly such that the term
$\sqrt{\lambda\lambda_i^{(\ell+1)}/\lambda_1}\eta_{\mathcal K^{(\ell+1)},i}$
is small which is stricter than the condition $\sqrt{\lambda_k^{(\ell+1)}}\eta_{\mathcal K_k^{(\ell+1)},k,k}$
is small when $\lambda>\lambda_1$. But we can implement Algorithm \ref{Algorithm_2} in parallel for
different eigenpairs which is the most important advantage of this algorithm.

\section{Geometric multigrid method for eigenvalue problem}
In this section, we discuss a type of geometric multigrid (GMG) method  for
the standard elliptic eigenvalue problem \cite{LinXie,Xie_JCP,Xie_IMA}.
Here, the standard notation for Sobolev spaces $H^s(\Omega)$
and their associated norms and semi-norms \cite{Adams} will be used.
We denote $H_0^1(\Omega)=\{v\in H^1(\Omega):\ v|_{\partial\Omega}=0\}$,
where $v|_{\partial\Omega}=0$ is in the sense of trace.
The letter $C$ (with or without subscripts) denotes a generic positive constant
which may be different at its different occurrences in this section.

The concerned eigenvalue problem in this section is defined as follows: Find $(\lambda,u)$ such that
\begin{equation}\label{LaplaceEigenProblem}
\left\{
\begin{array}{rcl}
-\Delta u&=&\lambda u,\ \ \  {\rm in}\ \Omega,\\
u&=&0,\ \ \ \ \  {\rm on}\ \partial\Omega.
\end{array}
\right.
\end{equation}

In order to use the finite element method to solve
the eigenvalue problem (\ref{LaplaceEigenProblem}), we need to define
the corresponding variational form as follows:
Find $(\lambda, u )\in \mathbb{R}\times V$ such that $a(u,u)=1$ and 
\begin{eqnarray}\label{weak_eigenvalue_problem}
a(u,v)&=&\lambda b(u,v),\quad \forall v\in V,
\end{eqnarray}
where $V:=H_0^1(\Omega)$ and
\begin{equation}\label{inner_product_a_b}
a(u,v)=\int_{\Omega}\nabla u\cdot\nabla vd\Omega,
 \ \ \ \  \ \ b(u,v) = \int_{\Omega}uv d\Omega.
\end{equation}
The norms $\|\cdot\|_a$ and $\|\cdot\|_b$ are defined as
\begin{eqnarray*}
\|v\|_a=\sqrt{a(v,v)}\ \ \ \ \ {\rm and}\ \ \ \ \ \|v\|_b=\sqrt{b(v,v)}.
\end{eqnarray*}
Now, we introduce the finite element method for the eigenvalue problem
(\ref{weak_eigenvalue_problem}). First we decompose the computing domain
$\Omega\subset \mathbb{R}^d\ (d=2,3)$
into shape-regular triangles or rectangles for $d=2$ (tetrahedrons or
hexahedrons for $d=3$) to produce the mesh $\mathcal{T}_h$ (cf. \cite{BrennerScott,Ciarlet}).
The diameter of a cell $K\in\mathcal{T}_h$ is denoted by $h_K$ and
the mesh size $h$ describes  the maximum diameter of all cells
$K\in\mathcal{T}_h$. Based on the mesh $\mathcal{T}_h$, we
construct the linear finite element space  $V_h \subset V$ as follows:
\begin{equation}\label{linear_fe_space}
V_h = \big\{ v_h \in C(\Omega)\ \big|\ v_h|_{K} \in \mathcal{P}_1,
\ \ \forall K \in \mathcal{T}_h\big\}\cap H_0^1(\Omega),
\end{equation}
where $\mathcal{P}_1$ denotes the space of polynomials of degree at most $1$.

The standard finite element scheme for the eigenvalue
 problem (\ref{weak_eigenvalue_problem}) can be defined as follows:
Find $(\lambda_h, u_h)\in \mathbb{R}\times V_h$
such that $a(u_h,u_h)=1$ and
\begin{eqnarray}\label{Weak_Eigenvalue_Discrete}
a(u_h,v_h)
&=&\lambda_h b(u_h,v_h),\quad\ \  \ \forall v_h\in V_h.
\end{eqnarray}

Based on the basis system, the discrete eigenvalue problem (\ref{Weak_Eigenvalue_Discrete})
can be transformed to the following general algebraic eigenvalue problem:
Find $(\lambda,u)\in \mathbb R\times \mathbb R^n$ such that
\begin{eqnarray}\label{Eigenvalue_Equation_FEM}
Au&=&\lambda M u,
\end{eqnarray}
where $n:={\rm dim}(V_h)$, $A$ and $M$ denote the stiff and mass matrices, respectively,
corresponding to the finite element space $V_h$.

For simplicity of description in this and next sections, we only consider the applications of
Algorithm \ref{Algorithm_1}. It is not difficult to understand that Algorithm \ref{Algorithm_2}
can be used similarly. Theorem \ref{Theorem_Error_Estimate_Inverse} can give the understanding
and also a new proof for the GMG method \cite{Xie_JCP} for the eigenvalue
problem (\ref{Weak_Eigenvalue_Discrete}). In this GMG method, the basic space $\mathcal K$
is chosen as the low dimensional
finite element space $V_H$ which is defined on the coarse mesh $\mathcal T_H$ and
it is obvious that $\mathcal K:=V_H\subset V_h$. In order to use
Algorithm \ref{Algorithm_1}, we only need to define the $L^2$ inner product
in (\ref{Subspace_Eigen_Problem}) with the mass matrix $M$ by the following way
\begin{eqnarray*}
(v,w)= v^TMw,\ \ \ \ \forall v\in\mathbb R^n\  {\rm and}\ \forall w\in \mathbb R^n
\end{eqnarray*}
and the linear equations (\ref{Linear_Equation}) are replaced by the
following standard linear equations
\begin{eqnarray}\label{Linear_Equation_M}
Au_i^{(\ell+1)}=\lambda_i^{(\ell+1)}M\widetilde u_i^{(\ell+1)},\ \ \ i=1,...,k,
\end{eqnarray}
which can be solved by the well-known GMG method for boundary value problems
\cite{Bramble,BrambleZhang,BrennerScott,Hackbusch_Book,McCormick,RugeStuben,
Shaidurov,Stuben,TrottenbergOosterleeSchuller,Xu1992Iterative}.

With the standard results from the finite element
theory \cite{BrennerScott,Ciarlet}, we can give the following estimates
for the quantities appeared in Theorem \ref{Theorem_Error_Estimate_Inverse}.
Combining (\ref{Definition_Inverse_1}) and the well-known Aubin-Nitsche result
$\eta_{\mathcal K_k^{(\ell+1)}}\leq \eta_{\mathcal K}\leq CH$ (cf. \cite{BrennerScott,Ciarlet,Xie_JCP})
leads to the following estimate
\begin{eqnarray}\label{Inequality_14}
\eta_{\mathcal K_k^{(\ell+1)},k,k}=\Big(1+\frac{\mu_{k+1}^{(\ell+1)}}{\delta_{k,k}^{(\ell+1)}}\Big)
\eta_{\mathcal K_k^{(\ell+1)}} \leq
\Big(1+\frac{\mu_{k+1}}{\delta_{k,k}^{(\ell+1)}}\Big)\eta_\mathcal K
=\Big(1+\frac{1}{\delta_{k,k}^{(\ell+1)}\lambda_{k+1}}\Big)\eta_{\mathcal K}\leq CH,
\end{eqnarray}
where we use the property $\mu_{k+1}^{(\ell+1)}\leq \mu_{k+1}=1/\lambda_{k+1}$ and
the constant $C$ depends on the eigenvalue gap $\delta_{k,k}^{(\ell+1)}$ and the eigenvalue $\lambda_{k+1}$.
From Theorem \ref{Theorem_Error_Estimate_Inverse} and (\ref{Inequality_14}), the following convergence result holds
\begin{eqnarray}\label{Error_Estimate_Inverse_GMG}
\Big(\sum_{i=1}^k\|u_i-E_{m,k}^{(\ell+1)}u_i\|_A^2\Big)^{1/2}
\leq C\sqrt{\lambda_k^{(\ell+1)}}H\Big(\sum_{i=1}^k\|u_i-E_{m,k}^{(\ell)}u_i\|_A^2\Big)^{1/2}.
\end{eqnarray}
From (\ref{Error_Estimate_Inverse_GMG}), in order to produce the uniform convergence,
we only need to choose the size $H$ of the coarse mesh $\mathcal T_H$ small enough
such that the following condition holds
\begin{eqnarray}\label{Mesh_Condition}
C\sqrt{\lambda_k^{(\ell+1)}}H<1.
\end{eqnarray}

Since the definition $\mathcal K=V_H$ results in the sparse matrices for the eigenvalue
problem (\ref{Subspace_Eigen_Problem}), the required memory for Algorithm \ref{Algorithm_1} is almost optimal and
less than mostly existed eigenvalue solvers. For more details, please refer to papers \cite{LinXie,Xie_JCP,Xie_IMA}
and the numerical examples provided there. Different from the existed GMG methods for eigenvalue problems \cite{BiYang,BrandtMcCormickRuge,CaiMandelMcCormick,ChenHeLiXie,Hackbusch,Hackbusch_Book,Shaidurov},
our method only need to solve standard linear elliptic boundary value problems (\ref{Linear_Equation_M})
and any efficient linear solvers can be used without any modification. The GMG methods in \cite{BrandtMcCormickRuge,CaiMandelMcCormick,Hackbusch,Hackbusch_Book,Shaidurov}
are designed based on the shift-inverse power method for eigenvalue problem and GMG solver
is used as an inner iteration with modifications since there appear singular
or nearly singular linear equations. Furthermore, since the scale of
eigenvalue problem (\ref{Subspace_Eigen_Problem}) that we need to solve is very small,
the choice of the corresponding eigenvalue solver is very free.

\section{Algebraic multigrid method for eigenvalue problem}
Based on Algorithm \ref{Algorithm_1} and the corresponding convergence result in
Theorem \ref{Theorem_Error_Estimate_Inverse} in Section 4, similarly to the idea
presented in the previous section, we can design and analyze a type of algebraic multigrid (AMG)
method for the eigenvalue problem (\ref{Eigenvalue_Equation_0}).
If we can find a suitable low dimensional subspace $\mathcal K$ by some type of coarsening step,
a type of inverse power method with a fast convergence rate can be designed based on Algorithm \ref{Algorithm_1}.
Inspired by the AMG method for linear equations \cite{RugeStuben,Stuben,TrottenbergOosterleeSchuller,XuZikatanov},
the natural low dimensional subspace can be chosen as the coarse space in AMG method and the AMG method
is also an efficient solver for linear equations (\ref{Linear_Equation}).

In this section, we set $n_c={\rm dim}(\mathcal K)<n$ as the dimension of the subspace $\mathcal K$.
First, let us consider a special case that the subspace $\mathcal K:={\rm span}\{u_j\}_{j=1}^{n_c}$
which is constituted by the eigenvectors corresponding to the smallest $n_c$ eigenvalues of $A$.
In the following analysis, we assume any vector $g\in \mathbb R^n$ has the expansion $g=\sum_{j=1}^n\alpha_ju_j$.
Then  the following inequalities hold
and
\begin{eqnarray}\label{Eta_AMG}
\eta_{\mathcal K_k^{(\ell+1)}}&:=&\sup_{\|g\|_2=1}\|(I-\mathcal P_{\mathcal K_k^{(\ell+1)}})Tg\|_A
\leq \sup_{\|g\|_2=1}\Big\|(I-\mathcal P_\mathcal K)\sum_{j=1}^n\mu_j\alpha_ju_j\Big\|_A\nonumber\\
&=&\sup_{\|g\|_2=1}\Big\|\sum_{j=n_c+1}^n\mu_j\alpha_ju_j\Big\|_A
=\sup_{\|g\|_2=1}\Big\|\sum_{j=n_c+1}^n\sqrt{\lambda_j}\mu_j\alpha_ju_j\Big\|_2\nonumber\\
&=&\sup_{\|g\|_2=1}\Big\|\sum_{j=n_c+1}^n\sqrt{\mu_j}\alpha_ju_j\Big\|_2\leq \sqrt{\mu_{n_c+1}}
\sup_{\|g\|_2=1}\Big\|\sum_{j=n_c+1}^n\alpha_ju_j\Big\|_2
\leq \sqrt{\mu_{n_c+1}}.
\end{eqnarray}
In (\ref{Definition_Inverse_2}) and (\ref{Definition_Inverse_1}),
since $\mathcal K\subset\mathcal K_k^{(\ell+1)}$, (\ref{Eta_AMG}) and the property
$\mu_{k+1}^{(\ell+1)}\leq \mu_{k+1}=1/\lambda_{k+1}$, we have the following estimates
\begin{eqnarray}
&&\theta_{\mathcal K_k^{(\ell+1)}}\leq \sqrt{1+\frac{1}{\lambda_{k+1}\lambda_{n_c+1}\big(\delta_{k,k}^{(\ell+1)}\big)^2}},\label{Estimates_AMG_1}\\
&&\eta_{\mathcal K_k^{(\ell+1)},k,k}=\Big(1+\frac{\mu_{k+1}^{(\ell+1)}}{\delta_{k,k}^{(\ell+1)}}\Big)
\eta_{\mathcal K_k^{(\ell+1)}} \leq
\Big(1+\frac{1}{\lambda_{k+1}\delta_{k,k}^{(\ell+1)}}\Big)\sqrt{\frac{1}{\lambda_{n_c+1}}}.\label{Estimates_AMG_2}
\end{eqnarray}
Then from (\ref{Error_Estimate_Inverse}), (\ref{Estimates_AMG_1}) and (\ref{Estimates_AMG_2}),
the convergence rate of Algorithm \ref{Algorithm_1} has the following estimate
\begin{eqnarray}\label{Error_Estimate_Inverse_AMG}
&&\Big(\sum_{i=1}^k\|u_i-E_{m,k}^{(\ell+1)}u_i\|_A^2\Big)^{1/2}\nonumber\\
&&\hskip-1.4cm\leq\sqrt{1+\frac{1}{\lambda_{k+1}\lambda_{n_c+1}
\big(\delta_{k,k}^{(\ell+1)}\big)^2}}\sqrt{\frac{\lambda_k}{\lambda_{k+1}}}
\Big(1+\frac{1}{\lambda_{k+1}\delta_{k,k}^{(\ell+1)}}\Big)\sqrt{\frac{\lambda_k^{(\ell+1)}}{\lambda_{n_c+1}}}
\Big(\sum_{i=1}^k\|u_i-E_{m,k}^{(\ell)}u_i\|_A^2\Big)^{1/2}.
\end{eqnarray}
From (\ref{Error_Estimate_Inverse_AMG}), the convergence speed can be improved from $\sqrt{\lambda_k/\lambda_{k+1}}$
if $\lambda_k^{(\ell+1)}<\lambda_{n_c+1}$ which only need $n_c+1>k$ and $\lambda_k^{(\ell+1)}$
has a coarse accuracy.

If the algebraic eigenvalue problem (\ref{Eigenvalue_Equation_0}) is produced by the discretization of
the partial differential operator eigenvalue problem (\ref{LaplaceEigenProblem}) with the finite element method,
the Weyl's law \cite{LiYau,Weyl} tells us $\lambda_j$ has the following asymptotic estimate
\begin{eqnarray*}
\lambda_{j} \approx \left(\frac{j}{|\Omega|}\right)^{2/d},\ \ \ \forall j\in \mathbb N,
\end{eqnarray*}
which leads to the following estimate
\begin{eqnarray}\label{Error_Estimate_Inverse_AMG_2}
\Big(\sum_{i=1}^k\|u_i-E_{m,k}^{(\ell+1)}u_i\|_A^2\Big)^{1/2}
&\leq&C\sqrt{\frac{\lambda_k}{\lambda_{k+1}}}\Big(\frac{k}{n_c+1}\Big)^{1/d}
\Big(\sum_{i=1}^k\|u_i-E_{m,k}^{(\ell)}u_i\|_A^2\Big)^{1/2},
\end{eqnarray}
where $d$ is the dimension of computing domain $\Omega$ and $|\Omega|$ denotes the volume of $\Omega$.
The estimate (\ref{Error_Estimate_Inverse_AMG_2}) means that we can improve the convergence rate if
$k<n_c+1$, i.e., the dimension of the subspace $\mathcal K$ is larger than the number of desired eigenvalues.

Since the eigenvectors of the matrix $A$ are more expensive to compute, the practical value of above estimates is limited. But they
provide a useful guidance to design practical AMG method for eigenvalue problems.  From (\ref{Error_Estimate_Inverse})
 and (\ref{Eta_AMG}), the first criterion for constructing the subspace $\mathcal K$
is that it can approximate the eigenvectors corresponding to small eigenvalues \cite{XuZikatanov}.
In order to reduce the computation, the second criterion is that we can use the sparse representation of the subspace $\mathcal K$.
Fortunately, a suitable coarse space of the AMG method for the linear equations satisfies these two criterions.
Thus, we can use the usual coarsening scheme to produce the low dimensional subspace $\mathcal K$ for Algorithm \ref{Algorithm_1}
which can be called AMG method for eigenvalue problems. For more information, please refer to \cite{HanHeXieYou}.

\section{Concluding remarks}
In this paper, we give the energy error estimate of the subspace projection method for eigenvalue problems.
Furthermore, the relation between error estimates in $L^2$-norm and energy norm is also provided.
Based on the energy error estimate and the relation, a new type of inverse power method based on
the subspace projection method is proposed and the convergence analysis is also presented.
Then we discuss the geometric and algebraic multigrid methods for eigenvalue problems
based on the derived convergence result for the proposed inverse power method on the special subspace.

These analysis and discussion give us a new understanding of the subspace projection method and provide
a new idea to design the multigrid method for eigenvalue problems. We would like to point out that the
most important aim of this paper is to present the idea and understanding of the application of
the coarse subspace, which can be produced by the coarse mesh in GMG and coarsening technique in AMG,
to eigenvalue problems. Of course, the idea or tool here can be coupled with
other techniques such as shift and inverse, polynomial filtering, restarting (cf. \cite{BaiDemmelDongarraRuheVorst,Yousef}).
These will be investigated in our future work.


\end{document}